%%%%%%%%%%%%%%%%%%%%%%%%%%%%%%%%%%%%%%%%%%%%%%%%%%%%%%%%%%%%%%
\documentclass[11pt,a4paper]{article}
\usepackage{mathrsfs}
\usepackage{amsfonts}
\usepackage[active]{srcltx}
\usepackage{indentfirst,latexsym,bm,amsmath,pstricks,amssymb,amsthm,amscd}
\usepackage[all]{xy}

%%%%%%%%%%%%%%%%%%%%%  Seitengroesse  %%%%%%%%%%%%%%%%%%%%%%%
\setlength{\headheight}{8pt}
\setlength{\textheight}{22.3cm} \setlength{\textwidth}{15cm}
\setlength{\oddsidemargin}{1.5cm} \setlength{\evensidemargin}{1.5cm}
\setlength{\topmargin}{0cm} \setlength{\unitlength}{1mm}
\begin{document}
%\addtolength{\parinddent}{2em plus 1em minus 1em]
%======================= 标题名称中文化 ============================%
\theoremstyle{plain}
\newtheorem{thm}{Theorem}[section]
\newtheorem{theorem}[thm]{Theorem}
\newtheorem{addendum}[thm]{Addendum}
\newtheorem{lemma}[thm]{Lemma}
\newtheorem{corollary}[thm]{Corollary}
\newtheorem{proposition}[thm]{Proposition}
%%%%%%%%%%%%%%%%%%%% Text roman %%%%%%%%%%%%%%%%%%%%%%%%%%%%%

\newcommand{\mR}{\mathbb{R}}
\newcommand{\mZ}{\mathbb{Z}}
\newcommand{\mN}{\mathbb{N}}
\newcommand{\mC}{\mathbb{C}}
\newcommand{\mP}{\mathbb{P}}
\newcommand{\mQ}{\mathbb{Q}}
\newcommand{\CO}{\mathcal{O}}
\newcommand{\CE}{\mathcal{E}}
\newcommand{\CF}{\mathcal{F}}
\newcommand{\CG}{\mathcal{G}}
\newcommand{\CL}{\mathcal{L}}
\newcommand{\CM}{\mathcal{M}}
\newcommand{\CP}{\mathcal{P}}
\newcommand{\CS}{\mathcal{S}}
\newcommand{\CA}{\mathcal{A}}
\newcommand{\CB}{\mathcal{B}}
\newcommand{\CC}{\mathcal{C}}
\newcommand{\CH}{\mathcal{H}}
\newcommand{\CI}{\mathcal{I}}
\newcommand{\CJ}{\mathcal{J}}
\newcommand{\CN}{\mathcal{N}}
\newcommand{\CR}{\mathcal{R}}
\newcommand{\CZ}{\mathcal{Z}}

\newcommand{\FA}{\mathfrak{A}}
\newcommand{\FB}{\mathfrak{B}}
\newcommand{\FC}{\mathfrak{C}}
\newcommand{\FD}{\mathfrak{D}}

\newcommand{\SA}{\mathscr{A}}
\newcommand{\SB}{\mathscr{B}}
\newcommand{\SC}{\mathscr{C}}
\newcommand{\SD}{\mathscr{D}}
\newcommand{\SU}{\mathscr{U}}

\newcommand{\mr}{\mbox}
\newcommand{\I}{{\bf I}}
\newcommand{\J}{{\bf J}}
\newcommand{\bs}{{\bf s}}
\newcommand{\ts}{{\tilde s}}
\newcommand{\e}{{{\bf e}}}
\newcommand{\m}{{\bf m}}
\newcommand{\ord}{\mathrm{ord}}
\newcommand{\fb}{\mathfrak{b}}

\theoremstyle{definition}
\newtheorem{remark}[thm]{Remark}
\newtheorem{remarks}[thm]{Remarks}
\newtheorem{notations}[thm]{Notations}
\newtheorem{definition}[thm]{Definition}
\newtheorem{claim}[thm]{Claim}
\newtheorem{assumption}[thm]{Assumption}
\newtheorem{assumptions}[thm]{Assumptions}
\newtheorem{property}[thm]{Property}
\newtheorem{properties}[thm]{Properties}
\newtheorem{example}[thm]{Example}
\newtheorem{examples}{Examples}
\newtheorem{conjecture}[thm]{Conjecture}
\newtheorem{questiosns}[thm]{Questions}
\newtheorem{question}[thm]{Question}
\newtheorem{problem}[thm]{Problem}
\numberwithin{equation}{section}
 \newcommand{\Rnm}[1]{\uppercase\expandafter{\romannumeral #1}}

\title{Fractional Dehn twists and modular invariants}
\author{Xiao-Lei Liu}
%\footnotetext[1]{School of Mathematical Sciences, Dalian University of Technology, Dalian, PR China
%
% E-mail: xlliu1124@dlut.edu.cn}
%
\date{}

 \maketitle
{\bf Abstract} {  In this note, we establish a relationship between fractional Dehn twist coefficients of Riemann surface automorphisms and modular invariants of holomorphic families of algebraic curves. Specially, we give a characterization of pseudo-periodic maps  with nontrivial fractional Dehn twist coefficients. We also obtain some uniform lower bounds of non-zero fractional Dehn twist coefficients.}\\

{\bf{Keywords}} {fractional Dehn twists, modular invariants}\\

{\bf{MSC(2010)}} {14D06, 14H10, 14H99}

\section{Introduction}

%Hereafter, all real 2-dimensional manifolds will be oriented, and all homeomorphisms between them will be orientation-preserving. In what follows, we will assume $g\geq2$ unless otherwise stated.

Mapping class group is important in low-dimensional topology, and Dehn twists are the generators of this group. Recently, there are many authors (\cite{HKM07, HKM08,HM15,IK12, KR13, Ro011},...) studied fractional Dehn twist coefficients in various aspects of 3-manifold. The study of fractional Dehn twist coefficients dates at least from Gabai and Oertel \cite{GO89}.

Let $\Sigma_g$ be a closed connected Riemann surface of genus $g\geq2$. The mapping class group $\mbox{Mod}(\Sigma_g)$ of $\Sigma_g$ is the group of isotopy classes of orientation preserving homeomorphism of $\Sigma_g$.  The Nielsen-Thurston classification Theorem says that any mapping class $\phi\in \mbox{Mod}(\Sigma_g)$ is either periodic, pseudo-Anosov, or reducible. The homeomorphism $\phi$ is reducible if there exists  finite simple closed curves $\SC=\{\gamma_1,\ldots,\gamma_r\}$ on $\Sigma_g$ such that the restriction of $\phi$ on  $\Sigma_g\backslash\SC$ is either periodic or pseudo-Anosov. If $\phi\in \mathrm{Mod}(\Sigma_g)$ is periodic, or $\phi$ is reducible and the restriction is periodic, then $\phi$ is said to be {\it pseudo-periodic}. We may assume $\SC$ satisfies the following additional conditions (i) $\gamma_i$ does not bound a disk on $\Sigma_g$, and (ii) $\gamma_i$ is not parallel to $\gamma_j$ if $i\neq j$ (\cite[Lemma 1.1]{MM11}). Such $\SC$ is called an {\it admissible system} of cut curves.

  Given a pseudo-periodic map $\phi$, a sufficiently high power $\phi^m$ preserves each cut curve $\gamma_1,\ldots,\gamma_r$.   Denote by $T_{\gamma_i}$ the (right-hand) Dehn twist of $F$ along $\gamma_i$,  then there is a factorization of $\phi$ into a  commutative product
  $$\phi^m=T_{\gamma_1}^{k_1}\cdots T_{\gamma_r}^{k_r}.$$
 The {\it fractional Dehn twist coefficient} of $\phi$ along $\gamma_i$ is defined to be
  $$c(\phi,\gamma_i)=k_i/m.$$
  This definition does not depend on the choice of $m$, and indeed, $c(\phi^n,\gamma_i)=n\cdot c(\phi,\gamma_i)$ (see \cite[Section 2.2.2]{Li17}).

 If $\phi\in {\mathrm{Mod}}(\Sigma_g)$ is a pseudo-periodic map {\it  of negative twist}, that is, $c(\phi,\gamma)<0$ for each $\gamma\in\SC$, then there exists a  local family $f_\phi:S\to\Delta$ whose monodromy homeomorphism around its central fiber is equal (up to isotopy and conjugation) to $\phi$.   Here, a {\it family} of complex projective curves of genus $g$ is a surjective holomorphic morphism $f:X\to Y$ whose general fiber is a smooth curve of genus $g$, where $X$ is a complex smooth projective surface, and $Y$ is a complex smooth projective curve of genus $b$. Moreover, if $Y=\Delta$ is the unit disk of the complex plane, and only the fiber $F=f^{-1}(0)$ over the origin is singular, then we call $f$ a {\it local family}. Denote by $\bar F$ the {minimal normal crossing model} of $F$, that is, $\bar F$ is normal crossing without redundant $(-1)$-curves, and $\bar F_{\scriptstyle{\mathrm{red}}}$ has at worst ordinary double points as its singularities.

If $f$ is a family of curves, and $F_1,\ldots, F_s$ are all the singular fibers of $f$, then there is a local family $f_{F_k}$ with central fiber $F_k$ for each $k$.
Denote by $\phi_{F_k}$ the monodromy homeomorphism of $f_k$, then $\phi_{F_k}$ is a pseudo-periodic map of negative twist (\cite[Theorem 7.1]{MM11}). We denote by $\SC_{F_k}$ the admissible system of cut curves of $\phi_{F_k}$.

Let $\CM_g$ be the moduli space of smooth curves of genus $g$ over the field of complex numbers $\mC$, and $\Delta_0,\Delta_1,\cdots,\Delta_{[g/2]}$ be the boundary divisors of the Deligne-Mumford compactification $\overline \CM_g$. The family $f$ induces  a holomorphic map from $Y$ to the moduli space $\overline{\CM}_g$:
$$J: Y\to \overline{\CM}_g.$$

 For each
$\mathbb{Q}$-divisor class $\eta$ of the moduli space
$\overline{\CM}_g$, we can define an invariant $\eta(f)=\deg
J^*\eta$ which satisfies the {\it base change property}, i.e., if
$\tilde f:\tilde X\to \tilde Y$ is the pullback fibration of $f$
under a base change $\pi:\tilde Y \to Y$ of degree $d$, then
$\eta(\tilde f)=d\cdot \eta(f)$ (see \cite{Ta10}).

Let $\delta_i$ be the  $\mQ$-divisor class corresponding to $\Delta_i$, and
\begin{equation}
\delta=\delta_0+\delta_1+\cdots+\delta_{[g/2]}.
\end{equation}
The corresponding modular invariants of $f$ are $\delta_i(f)$,$\delta(f)$. Hence $\delta_i(f)=0$ if and only if the image $J(Y)$ of $f$ does not intersect with $\Delta_i$. In particular, if $f:X\to Y$ is isotrivial, then $\delta(f)=0$.

Modular invariants  are basic in the study of fibrations of algebraic surfaces and moduli spaces of algebraic curves, see \cite{Ta10,LT13,Li16,No07}. In arithmetic algebraic geometry, modular invariants are some heights of algebraic curves, and can be used to give uniformity properties of curves, see \cite{LT17}.

The first result of this note is a relationship between fractional Dehn twist coefficients and modular invariants. Precisely,

\begin{theorem}\label{thmmain}
Let $\phi\in {\mathrm{Mod}}(\Sigma_g)$ be a pseudo-periodic map of negative twist.
Then
\begin{equation}\label{eqnthmmain1}
  \delta(f_\phi)=\sum_{\gamma\in \SC}|c(\phi,\gamma)|.
\end{equation}
In particular, if  $f:X\to Y$ is a family of curves, then
\begin{equation}
  \delta(f)=\sum_{k=1}^s \sum_{\gamma\in \SC_{F_k}}|c(\phi_{F_k},\gamma)|.
\end{equation}
\end{theorem}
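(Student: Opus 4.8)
The plan is to reduce the second identity to the first, and to prove the first by passing to a semistable model after a cyclic base change and counting nodes. The reduction is formal: $\delta$ is a $\mathbb{Q}$-divisor class on $\overline{\CM}_g$ and $J^*\delta$ is a divisor on $Y$ supported at the finitely many points with singular fibre, so $\delta(f)=\deg J^*\delta=\sum_{k=1}^s d_k$, where $d_k$ is the local degree of $J^*\delta$ at the point corresponding to $F_k$. By construction of $f_{F_k}$ this local degree equals $\delta(f_{F_k})$, whose monodromy is $\phi_{F_k}$; applying \eqref{eqnthmmain1} to each $\phi_{F_k}$ gives the second formula, so everything reduces to \eqref{eqnthmmain1}.

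To prove \eqref{eqnthmmain1}, write $\SC=\{\gamma_1,\dots,\gamma_r\}$ and fix $m$ with $\phi^m=T_{\gamma_1}^{k_1}\cdots T_{\gamma_r}^{k_r}$, so $c(\phi,\gamma_i)=k_i/m$. I would pull $f_\phi$ back along $\pi\colon\Delta\to\Delta$, $z\mapsto z^m$, getting a family $\tilde f$ whose monodromy is $\phi^m$. As the $\gamma_i$ are disjoint, the twists $T_{\gamma_i}$ act on $H_1(\Sigma_g;\mathbb{Z})$ by commuting transvections, so $\phi^m$ is unipotent on homology; hence $\tilde f$ admits a semistable model $\tilde f^{\mathrm{ss}}$ with no further base change, still with monodromy $\phi^m=T_{\gamma_1}^{k_1}\cdots T_{\gamma_r}^{k_r}$ (birational modification of the total space does not affect the monodromy over the punctured disk), and $\SC$, after discarding the $\gamma_i$ with $k_i=0$, remains an admissible cut system for $\phi^m$ since $\phi^m$ restricts to the identity off $\SC$. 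By the base change property $\delta(\tilde f)=m\,\delta(f_\phi)$, while for the semistable family $\tilde f^{\mathrm{ss}}$ the invariant $\delta(\tilde f)$ equals the number of nodes of its central fibre.

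The crux, and the step I expect to be the main obstacle, is the local node count: for a semistable local family whose monodromy is isotopic to $\prod_\gamma T_\gamma^{n_\gamma}$ along disjoint simple closed curves, the semistable central fibre should have exactly $\sum_\gamma|n_\gamma|$ nodes. Heuristically, away from tubular neighbourhoods of the $\gamma$'s the family is a trivial product, so the degeneration is concentrated along the $\gamma$'s; along a single $\gamma$ the monodromy $T_\gamma^{n_\gamma}$ forces an $A_{|n_\gamma|-1}$ singularity of the total space at the pinch point (local model $xy=t^{|n_\gamma|}$), whose minimal resolution inserts a chain of $|n_\gamma|-1$ rational curves and produces $|n_\gamma|$ nodes (just one node when $|n_\gamma|=1$). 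Making this rigorous requires the Matsumoto--Montesinos description of a neighbourhood of the central fibre of $f_\phi$, through which the $\gamma_i$ and their screw numbers $k_i$ are read off the weighted dual graph of $\bar F$, together with the fact that passing from $\phi$ to $\phi^m$ creates no new twisting curves.

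Granting this, $\delta(\tilde f)=\sum_{i=1}^r|k_i|$, so
$$m\,\delta(f_\phi)=\delta(\tilde f)=\sum_{i=1}^r|k_i|=m\sum_{i=1}^r|c(\phi,\gamma_i)|=m\sum_{\gamma\in\SC}|c(\phi,\gamma)|,$$
which is \eqref{eqnthmmain1}. An alternative route that avoids semistable reduction is to compute $\deg J^*\delta$ directly from the classification of degenerations: the screw number along $\gamma$ is the order to which the node into which $\gamma$ collapses gets deformed by the family, i.e.\ its contribution to the intersection multiplicity of $J(Y)$ with $\Delta=\sum_i\Delta_i$, and summing over $\SC$ again yields $\sum_{\gamma\in\SC}|c(\phi,\gamma)|$.
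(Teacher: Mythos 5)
Your main route is correct and takes a genuinely different path from the paper. The paper never touches semistable reduction directly in this proof: it cites \cite[Lemma 3.1]{LT17}, which expresses $\delta(F)$ as the sum $\sum_{\CC\in PC(\bar F)}H(\CC)$ of the combinatorial quantity $H$ over principal chains of the minimal normal crossing fibre $\bar F$, then combines this with the correspondence $\gamma\leftrightarrow\CC_\gamma$ between cut curves and principal chains (Theorem \ref{thmcorrCutPrc}), the identity $m_\gamma=\#\Pi^{-1}(\CC_\gamma)$, and Theorem \ref{lemCoefScr} ($|c(\phi,\gamma)|=H(\CC_\gamma)/m_\gamma$), which is itself extracted from the Matsumoto--Montesinos chorizo decomposition. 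You instead pull back by degree $m$, observe that the monodromy becomes the multi-twist $\prod T_{\gamma_i}^{k_i}$, count $\sum|k_i|$ nodes in the resulting semistable fibre, and divide by $m$ via the base change property / the definition \eqref{edeltaF}. Your argument is shorter and closer to first principles for Theorem \ref{thmmain} alone, but does not produce the refinement by type needed for Theorem \ref{thmmaindeltai} nor the chain-level formula of Theorem \ref{thmCoefScr}; the paper's scaffolding is built precisely so that all three theorems fall out of the same correspondence.

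Two points in your write-up deserve care. First, ``$\phi^m$ is unipotent on $H_1$, hence $\tilde f$ admits a semistable model with no further base change'' is not a valid implication as stated: unipotence of the $H_1$-action is necessary but not sufficient (Torelli elements act trivially on $H_1$ without being multi-twists). The correct justification is the stronger fact you already have, namely that $\phi^m$ is, in the mapping class group, a product of Dehn twists along disjoint curves; this is the criterion that guarantees a normal-crossing reduced central fibre after resolution and contraction. Second, the ``crux'' is not merely heuristic: once $\tilde f$ is semistable, the Picard--Lefschetz description identifies the monodromy with the product of one right-handed Dehn twist per node along its vanishing cycle, and since a multi-twist along disjoint essential curves has a unique such factorization, the vanishing cycles together with their multiplicities are forced, giving exactly $\sum_i|k_i|$ nodes; this does not need the full Matsumoto--Montesinos machinery, only the uniqueness of multi-twist decompositions. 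Finally, your ``alternative route'' conflates the screw number $s(\gamma)$ with the fractional Dehn twist coefficient $c(\phi,\gamma)$; Theorem \ref{lemCoefScr} shows these differ by the orbit length $m_\gamma$ (and by an extra factor of $2$ in the amphidrome case), so summing screw numbers would overcount.
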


We say that a singularity $p$ in a semistable curve $F$ is {\it
 of type} $i$ if its partial normalization at $p$ consists of
two connected components of arithmetic genera $i$ and $g-i\geq i$,
for $i>0$, and is connected for $i=0$. The general point of $\Delta_0$ is an irreducible curve with a node $p$ of {\it
$\alpha$-type}, i.e., an ordinary double point, hence the node $p$ is of type $0$. The general point of
$\Delta_i$ corresponds to a semistable curve with only one node of type $i~(i\geq1)$ as the
following figure.

 \setlength{\unitlength}{1mm}
\begin{center}
\begin{picture}(40,20)(0,-5)
\put(-8,-5){\makebox(-10,0)[l]{$~\mbox{~~~~~~Node of type~} i~
(i\geq1)$}} \put(0,0){\line(2,1){20}} \put(25,0){\line(-2,1){20}}
\put(12,2){\makebox(0,0)[l]{$p$}}
\put(28,10){\makebox(0,0)[l]{\mbox{genus}~$i$}}
\put(28,0){\makebox(0,0)[l]{\mbox{genus}~$g-i$}}
\end{picture}
\end{center}

Topologically, if $\gamma$ is a non-separated cut curve, then $\gamma$ is said to be {\it of type 0}; if $\gamma$ is separated, and the least genus  of the two connected components is $i\geq1$, then $\gamma$ is said to be {\it of type $i$}. Set
$$\SC_i=\{\gamma\in\SC:\gamma \mbox{~is~ of~ type~} i\},~~i\geq0.$$

\begin{theorem}\label{thmmaindeltai}
Let $\phi\in {\mathrm{Mod}}(\Sigma_g)$ be a pseudo-periodic map of negative twist.
Then
\begin{equation}
 \delta_i(f_\phi)=\sum_{\gamma\in \SC_i}|c(\phi,\gamma)|.
\end{equation}
In particular, if  $f:X\to Y$ is a family of curves, then
\begin{equation}
 \delta_i(f)=\sum_{k=1}^s \sum_{\gamma\in \SC_{F_k,i}}|c(\phi,\gamma)|.
\end{equation}
\end{theorem}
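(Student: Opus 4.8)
\textbf{Proof plan for Theorem~\ref{thmmaindeltai}.}
The strategy is to refine the proof of Theorem~\ref{thmmain} by tracking the \emph{type} of each contribution rather than only its absolute value. Since $\delta = \sum_i \delta_i$ and $\SC = \bigsqcup_i \SC_i$, the identity $\delta(f_\phi)=\sum_{\gamma\in\SC}|c(\phi,\gamma)|$ from Theorem~\ref{thmmain} already gives the total; what remains is to match, type by type, the intersection number $\delta_i(f_\phi)=\deg J^*\delta_i$ with $\sum_{\gamma\in\SC_i}|c(\phi,\gamma)|$. The plan is to pass to the semistable reduction: after a base change $\pi:\tilde\Delta\to\Delta$ of suitable degree $m$ (the same $m$ for which $\phi^m=T_{\gamma_1}^{k_1}\cdots T_{\gamma_r}^{k_r}$), the pulled-back family $\tilde f_\phi$ acquires a semistable central fiber $\tilde F$, and by the base change property $\delta_i(\tilde f_\phi)=m\cdot\delta_i(f_\phi)$. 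The node structure of $\tilde F$ is governed by the Matsumoto–Montesinos description: each cut curve $\gamma\in\SC$ with $c(\phi,\gamma)=k/m$ gives rise, in the minimal normal crossing model $\bar F$ and then after normalization, to a chain of $(-2)$-curves whose contraction produces exactly $|k|$ nodes on $\tilde F$, each of the \emph{same} topological type as $\gamma$.

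The key steps, in order, are: (i) recall from Theorem~\ref{thmmain}'s proof (or the Matsumoto–Montesinos normal form) the local model near each $\gamma_i$: the semistable central fiber $\tilde F$ is obtained by plumbing, and the cut curve $\gamma_i$ contributes a cyclic chain separating — or not — the surface according to whether $\gamma_i$ is separating; (ii) verify that the number of nodes produced on $\tilde F$ from $\gamma_i$ is $|k_i| = m|c(\phi,\gamma_i)|$, which is the content already used to prove Theorem~\ref{thmmain}; (iii) check that each such node has type $i$ in the sense of semistable curves \emph{if and only if} $\gamma_i$ has topological type $i$ in $\SC$ — here one matches the arithmetic genera of the two partial-normalization components of $\tilde F$ at the node against the genera of the two pieces of $\Sigma_g\setminus\gamma_i$; (iv) conclude $\delta_i(\tilde f_\phi) = \#\{\text{nodes of }\tilde F\text{ of type }i\} = \sum_{\gamma\in\SC_i}|k_\gamma| = m\sum_{\gamma\in\SC_i}|c(\phi,\gamma)|$, and divide by $m$; (v) for a global family $f:X\to Y$, apply this to each local family $f_{F_k}$ and sum, using that $\delta_i(f)=\sum_k \delta_i(f_{F_k})$ since the modular invariant $\delta_i$ is a sum of local contributions over the singular fibers (each $J^*\Delta_i$ is supported over the points $J(y_k)$).

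The main obstacle is step (iii): the correspondence between the \emph{topological} type of a cut curve $\gamma$ (defined via the genera of the two components of the complement $\Sigma_g\setminus\gamma$) and the \emph{algebro-geometric} type of the resulting node on the semistable model $\tilde F$ (defined via arithmetic genera of components of the partial normalization). One must be careful that the pseudo-periodic pieces glued along $\gamma$ can themselves carry nontrivial topology and that the semistable reduction redistributes genus among the chain of rational curves inserted along $\gamma$ — but since inserted curves are rational and the chain is contracted to a single point, the arithmetic genus on each side of the resulting node equals the genus of the corresponding side of $\Sigma_g\setminus\gamma$, so the types agree. A secondary point to handle carefully is the case of a separating curve whose two sides have equal genus $g/2$ (the boundary case $i = g-i$ in the definition of type $i$), and the bookkeeping when several cut curves of possibly different types are nested; both are handled by the local nature of the node count and cause no essential difficulty once step (iii) is settled.
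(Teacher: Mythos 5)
Your approach is correct in outline, but it takes a genuinely different route from the paper. The paper's proof of Theorem~\ref{thmmaindeltai} is a two-line corollary of an algebraic formula: it cites \cite[Lemma 3.1]{LT17}, which computes $\delta_i(F)=\sum_{\CC\in PC_i(\bar F)}H(\CC)$ from the \emph{minimal normal crossing} model $\bar F$ (no semistable reduction needed), and then runs the same bookkeeping as in Theorem~\ref{thmmain}: the type-preserving correspondence of Lemma~\ref{lemmacorrCutPc}/Theorem~\ref{thmcorrCutPrc} between cut curves and principal chains, together with Theorem~\ref{lemCoefScr}'s identity $|c(\phi,\gamma)|=H(\CC_\gamma)/m_\gamma$. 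You instead bypass the $H(\CC)$ formula and the minimal normal crossing model entirely, and go straight to a degree-$m$ base change whose monodromy $\phi^m=\prod T_{\gamma_j}^{k_j}$ is a negative multi-twist, then count nodes of the resulting semistable fiber $\tilde F$ directly. This is correct, and your step (iii) is handled properly: the inserted rational chain above $\gamma$ is contracted in the stable model without affecting arithmetic genus, so every node on that chain has the same type as $\gamma$. The trade-off is this: the paper's route is shorter once \cite{LT17} and the Section~\ref{sect-corresp} correspondences are in place, and it does not require knowing the explicit semistable model; your route is more self-contained and topological, at the cost of having to justify (a) that the degree-$m$ base change is already semistable, which follows from the fact that a local family has semistable central fiber exactly when its monodromy is a negative multi-twist, and (b) the node count $|k_j|$ per cut curve via the local model $xy=s^{|k_j|}$. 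One small inaccuracy in your write-up: you claim that the node count in step (ii) ``is the content already used to prove Theorem~\ref{thmmain}''; in fact the paper's proof of Theorem~\ref{thmmain} never counts nodes of $\tilde F$ — it argues entirely through $\bar F$ and the invariant $H$ — so your step (ii) is new work on your part, not a citation of Theorem~\ref{thmmain}'s proof.
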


As a corollary, we can give  a characterization of pseudo-periodic map of negative twist with nontrivial fractional Dehn twist coefficient. See  \cite[Corollary 1.4]{Li17} for another characterization in the aspect of topology of 3-manifold.

\begin{corollary}
Let $\phi$ be a pseudo-periodic map of negative twist. Then all fractional Dehn twist coefficients of $\phi$ vanish, if and only if the image $J(Y)$ of $f_\phi$ by the moduli map $J$ is contained in $\CM_g$.  Moreover, the fractional Dehn twist coefficients $c(\phi,\gamma)$ vanish for all cut curves $\gamma$ of type $i$ if and only if  $J(Y)$ does not intersect with the boundary divisor $\Delta_i$ of $\overline{\CM}_g$.
\end{corollary}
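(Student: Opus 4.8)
The plan is to obtain the corollary as a formal consequence of Theorems~\ref{thmmain} and~\ref{thmmaindeltai}, the only inputs being the non-negativity of the summands $|c(\phi,\gamma)|$, the decomposition of the cut system by type, and the equivalence recalled in the introduction that $\delta_i(f)=\deg J^*\delta_i=0$ if and only if $J(Y)$ misses the (effective) boundary divisor $\Delta_i$. Since $\phi$ is a pseudo-periodic map of negative twist, the local family $f_\phi:S\to\Delta$ and its moduli map $J$ are available, so all quantities below make sense.

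I would first prove the ``type $i$'' assertion. By Theorem~\ref{thmmaindeltai} we have $\delta_i(f_\phi)=\sum_{\gamma\in\SC_i}|c(\phi,\gamma)|$, a finite sum of non-negative real numbers; hence $\delta_i(f_\phi)=0$ if and only if $c(\phi,\gamma)=0$ for every $\gamma\in\SC_i$, i.e. for every cut curve of type $i$. Combining this with the equivalence ``$\delta_i(f)=0\iff J(Y)\cap\Delta_i=\emptyset$'' noted in the introduction yields the second statement of the corollary.

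For the first assertion, I would use that the admissible system decomposes as a disjoint union $\SC=\bigsqcup_{i=0}^{[g/2]}\SC_i$ according to the type of each cut curve. Thus ``all fractional Dehn twist coefficients of $\phi$ vanish'' is equivalent to ``$c(\phi,\gamma)=0$ for all $\gamma\in\SC_i$ and all $i$'', hence by the previous paragraph to ``$\delta_i(f_\phi)=0$ for all $i$'', and then (using $\delta=\sum_i\delta_i$ with each $\delta_i$ effective, so each $\delta_i(f_\phi)\ge 0$) to $\delta(f_\phi)=\sum_i\delta_i(f_\phi)=0$. Since $\overline{\CM}_g\setminus\CM_g=\bigcup_i\Delta_i$, this last condition says exactly that $J(Y)$ meets none of the $\Delta_i$, i.e. $J(Y)\subseteq\CM_g$; equivalently, one reads it off in one line from Theorem~\ref{thmmain}, as $\delta(f_\phi)=\sum_{\gamma\in\SC}|c(\phi,\gamma)|$ vanishes iff every term does. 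There is essentially no technical obstacle here: the content is carried entirely by Theorems~\ref{thmmain}--\ref{thmmaindeltai}, and the only point deserving (minor) care is the implication $\delta(f_\phi)=0\Rightarrow\delta_i(f_\phi)=0$ for each $i$, which relies on the non-negativity $\delta_i(f_\phi)\ge0$ (equivalently, on the non-negativity of the $|c(\phi,\gamma)|$ in Theorem~\ref{thmmaindeltai}).
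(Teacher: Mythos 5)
Your proposal is correct and follows exactly the argument the paper leaves implicit: the corollary is stated without a proof block because it is read off from Theorems~\ref{thmmain} and~\ref{thmmaindeltai} together with the non-negativity of the summands $|c(\phi,\gamma)|$ and the equivalence ``$\delta_i(f)=0\iff J(Y)\cap\Delta_i=\emptyset$'' recorded in the introduction. Your handling of the global statement via $\delta=\sum_i\delta_i$ with each $\delta_i(f_\phi)\ge0$ and $\overline{\CM}_g\setminus\CM_g=\bigcup_i\Delta_i$ is exactly the intended reduction to the type-$i$ case.
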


Let $\phi$ be a pseudo-periodic map of negative twist, and $F$ be the central fiber of $f_\phi$. We will show that  each cut curve $\gamma$ corresponds to a unique principal chain $\CC_\gamma$  of $\bar F$, and there is a relation between the data of $\bar F$ with that of $\SC$ in Section \ref{sect-corresp}. Furthermore, we get a formula of $c(\phi,\gamma)$  as follows:

\begin{theorem}[See Theorem \ref{lemCoefScr}]\label{thmCoefScr}
  $$|c(\phi,\gamma)|=\frac{H(\CC_\gamma)}{m_\gamma}.$$
\end{theorem}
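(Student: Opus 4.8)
The plan is to read off $|c(\phi,\gamma)|$ from a semistable reduction of $f_\phi$ and then to count the resulting nodes using the numerical data attached to the principal chain $\CC_\gamma$ in Section~\ref{sect-corresp}; the correspondence $\gamma\leftrightarrow\CC_\gamma$ of that section is what links the two computations.

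First I would choose an integer $N$ divisible by $m_\gamma$ and by all multiplicities of the components of $\bar F$, and large enough that the pullback $\tilde f\colon\tilde S\to\Delta$ of $f_\phi$ under $t\mapsto t^N$, after normalization and minimal resolution, is semistable. Its monodromy is $\phi^N$, and since $\tilde f$ is semistable, $\phi^N$ is conjugate to a product of Dehn twists with one factor along the vanishing cycle of each node of $\tilde F$, these cycles being mutually disjoint; moreover, by the correspondence of Section~\ref{sect-corresp}, the vanishing cycles parallel to $\gamma$ are exactly those attached to the nodes lying over $\CC_\gamma$. Hence in this factorization the exponent on $T_\gamma$ has absolute value equal to the number of nodes of $\tilde F$ over $\CC_\gamma$, and since $c(\phi^N,\gamma)=N\,c(\phi,\gamma)$,
\[
  N\,|c(\phi,\gamma)|\;=\;\#\bigl\{\text{nodes of }\tilde F\text{ lying over }\CC_\gamma\bigr\}.
\]

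It remains to evaluate the right-hand side. Over a neighbourhood of $\CC_\gamma$ in $\bar F$, the cyclic base change $t\mapsto t^N$ followed by normalization creates cyclic quotient singularities of the total space at the intersection points of consecutive components of $\CC_\gamma$ and at the points where $\CC_\gamma$ meets the principal part; at a point where components of multiplicities $m'$ and $m''$ meet, the singularity is the standard cyclic quotient determined by $N$, $m'$, $m''$. Resolving these by Hirzebruch--Jung strings and contracting $(-1)$-curves gives $\tilde S$, and the nodes created over $\CC_\gamma$ are governed by the continued fractions of these resolutions. Summing the contributions, and using the fibration relation $m_{j-1}+m_{j+1}=b_j m_j$ between the multiplicities $m_j$ and self-intersections $-b_j$ of $\CC_\gamma$ recorded in Section~\ref{sect-corresp}, one finds the total equals $H(\CC_\gamma)\cdot N/m_\gamma$, with $H(\CC_\gamma)$ and $m_\gamma$ as defined there. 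Dividing by $N$ gives $|c(\phi,\gamma)|=H(\CC_\gamma)/m_\gamma$. Alternatively, one can bypass the explicit reduction and instead invert the Matsumoto--Montesinos dictionary between the rotation data of $\phi$ on a collar of $\gamma$ and the numerical data of $\CC_\gamma$ recorded in Section~\ref{sect-corresp}, computing the screw number $|c(\phi,\gamma)|$ directly; this amounts to the same continued-fraction bookkeeping.

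I expect this bookkeeping to be the main obstacle, and it is where the hypotheses enter. Two points need care. First, $\bar F$ must be the \emph{minimal} normal crossing model, so that no spurious $(-1)$-curve is hidden inside $\CC_\gamma$ and corrupts the continued fractions; minimality of $\bar F$ together with negativity of the twist guarantees this. Second, the count should be organized along the classification of cut curves: the non-separating non-amphidrome case is the model computation; the separating case requires the two principal components flanking $\gamma$ and their multiplicities to be tracked simultaneously; and the amphidrome case --- where the induced monodromy folds $\CC_\gamma$ by an involution fixing its terminal component, so that the naive node count must be halved and corrected at that component --- demands a separate, more delicate argument. Verifying that the corrected count is still exactly $H(\CC_\gamma)\,N/m_\gamma$, and in particular independent of the auxiliary $N$, is the crux.
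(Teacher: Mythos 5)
Your main route (compare the factorization of $\phi^{N}$ coming from a semistable model with the node count on $\tilde F$) is genuinely different from the paper's. The paper never invokes semistable reduction in this proof: it works directly with the Matsumoto--Montesinos decomposition $\bar F=Ch(\SB')\cup Ch(\SD)\cup Ch(\SA_{\mathrm{ln}})\cup Ch(\SA_{\mathrm{sp}})$, first establishing in Theorem~\ref{thmcorrespondence} that the screw number satisfies $|s(\gamma)|=H(\CC_\gamma)$ (non-amphidrome) or $2H(\CC_\gamma)$ (amphidrome) by reading off the $Ch(\SA_\gamma)$ data, and then deducing $|c(\phi,\gamma)|=|s(\gamma)|/\alpha_\gamma$ from the definitions, where $\alpha_\gamma=m_\gamma$ or $2m_\gamma$. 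So the amphidrome/non-amphidrome split is already absorbed into Theorem~\ref{thmcorrespondence} and never needs to be re-examined at the level of the semistable reduction, which is what your alternative last paragraph amounts to; your alternative is closer to the paper but you would still need to separate out the factor $\alpha_\gamma$ cleanly rather than conflating ``screw number'' with ``fractional Dehn twist coefficient,'' which are related by $|c(\phi,\gamma)|=|s(\gamma)|/\alpha_\gamma$, not equal.

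In the semistable-reduction route there is, however, a concrete factor-of-$m_\gamma$ gap. You write that ``the vanishing cycles parallel to $\gamma$ are exactly those attached to the nodes lying over $\CC_\gamma$,'' and that the exponent on $T_\gamma$ in $\phi^N$ equals the number of nodes of $\tilde F$ over $\CC_\gamma$. But by Theorem~\ref{thmcorrCutPrc} and equation~(\ref{eq-m-pre}), $\Pi^{-1}(\CC_\gamma)$ consists of $m_\gamma$ distinct chains $\tilde\CC_{\gamma_1},\dots,\tilde\CC_{\gamma_{m_\gamma}}$, and only the nodes of the single chain $\tilde\CC_{\gamma_1}$ have vanishing cycle parallel to $\gamma=\gamma_1$; the others are parallel to $\gamma_2,\dots,\gamma_{m_\gamma}$. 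The total node count over $\CC_\gamma$ is $N\cdot H(\CC_\gamma)$ (this is exactly what makes $\delta(F)=\sum_{\CC}H(\CC)$ hold), while the exponent on $T_{\gamma_1}$ is that count divided by $m_\gamma$, i.e.\ $N\cdot H(\CC_\gamma)/m_\gamma$. As written, your intermediate claim and your final answer disagree by this factor; the conclusion is correct, but the identification of which nodes contribute to $T_\gamma$ has to be the single orbit representative $\tilde\CC_{\gamma_1}$, not the full $\Pi$-preimage.
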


For the bounds of fractional Dehn twist coefficients, there are many results in different contexts, see \cite[Theorem 1]{HM15}, \cite[Section 7]{IK12}, \cite[Theorem 2.16]{KR13}. In this note, we will give some uniform lower bounds of fractional Dehn twist coefficients as follows:

\begin{theorem}\label{thmlowerbounds}
Let $\phi\in {\mathrm{Mod}}(\Sigma_g)$ be a pseudo-periodic map of negative twist.
For each cut curve $\gamma$ with $c(\phi,\gamma)\neq0$, we have
\begin{equation}
 |c(\phi,\gamma)|\geq \begin{cases}
\frac1{4g(g+1)^2}, & \mbox{if~} g \mbox{~is~even,}\\
\frac1{4g^2(g+2)}, & \mbox{if~} g \mbox{~is~odd.}
\end{cases}
\end{equation}
Furthermore, if $\gamma$ is a cut curve of type $i$ with $c(\phi,\gamma)\neq0$,
then
\begin{equation}
 |c(\phi,\gamma)|\geq \begin{cases}
\frac1{4g^3}, & \mbox{if~} i=0,\\
\frac1{g(4i+2)(4(g-i)+2)}, & \mbox{if~} i\geq1.
\end{cases}
\end{equation}
\end{theorem}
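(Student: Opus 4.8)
The plan is to derive everything from the formula of Theorem~\ref{thmCoefScr}: for a cut curve $\gamma$ with $c(\phi,\gamma)\ne0$ one has $|c(\phi,\gamma)|=H(\CC_\gamma)/m_\gamma$, where $\CC_\gamma\subset\bar F$ is the principal chain attached to $\gamma$. Granting this, I would split the task into a \emph{lower} bound $H(\CC_\gamma)\ge1$ and an \emph{upper} bound for $m_\gamma$. The first is soft: since $c(\phi,\gamma)\ne0$, the chain $\CC_\gamma$ is non-degenerate (a non-empty chain of $(-2)$-curves joining two body components of $\bar F$), and by the way $H(\CC_\gamma)$ is assembled from the multiplicity sequence along $\CC_\gamma$ in Section~\ref{sect-corresp} it is a positive integer; hence $H(\CC_\gamma)\ge1$ and $|c(\phi,\gamma)|\ge 1/m_\gamma$. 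So it all comes down to bounding $m_\gamma$ from above.

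For that I would use the topological content of the correspondence. The multiplicity $m_\gamma$ is controlled by the multiplicities of the two body components of $\bar F$ meeting the ends of $\CC_\gamma$, and these are the periods $n_1,n_2$ of the periodic maps induced by $\phi$ on the two subsurfaces of $\Sigma_g$ adjacent to $\gamma$; more precisely $m_\gamma$ is bounded by a multiple --- of size at most a quantity controlled by $g$, accounting for the length of $\CC_\gamma$ and the number of boundary circles involved, both bounded via an Euler-characteristic count over the admissible system (recalling no $\gamma_j$ bounds a disk and no two are parallel) --- of $\operatorname{lcm}(n_1,n_2)\le n_1n_2$. To bound $n_1,n_2$ I would invoke Wiman's bound: a periodic orientation-preserving homeomorphism of a closed orientable surface of genus $h$ has order $\le4h+2$, applied after capping boundary circles by disks (which does not raise the genus); when the capped surface is planar the period is instead bounded by the number of its boundary circles, hence again by a quantity of order $g$. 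If $\gamma$ is of type $i\ge1$ the two subsurfaces have genus $\le i$ and $\le g-i$, so $n_1\le4i+2$, $n_2\le4(g-i)+2$ and $m_\gamma\le g(4i+2)(4(g-i)+2)$; if $\gamma$ is of type $0$ the connected complement of $\gamma$ has genus $g-1$, so $n_1,n_2\le4g-2$, and the same bookkeeping (plus the bounded correction for an amphidrome $\gamma$) gives $m_\gamma\le4g^3$. With $H(\CC_\gamma)\ge1$ this yields the type-wise bounds.

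The uniform bound, independent of the type, then follows by maximising $g(4i+2)(4(g-i)+2)$ over $0\le i\le[g/2]$: with $u=4i+2$, $v=4(g-i)+2$ and $u+v=4g+4$ fixed, $uv$ is maximal when $u,v$ are closest, i.e.\ at $i=g/2$ for $g$ even, where $uv=4(g+1)^2$ and $m_\gamma\le4g(g+1)^2$, and at $i=(g-1)/2$ for $g$ odd, where $uv=4g(g+2)$ and $m_\gamma\le4g^2(g+2)$; as both exceed the type-$0$ bound $4g^3$, the stated universal lower bounds for $|c(\phi,\gamma)|$ drop out. The step I expect to be the real obstacle is the $m_\gamma$ estimate in the second paragraph --- pinning down exactly how multiplicities propagate along $\CC_\gamma$, how the two body multiplicities combine (lcm versus product), and the correction for amphidrome cut curves --- for which one must use the explicit structure of the minimal normal crossing model established in Section~\ref{sect-corresp}; the remaining ingredients (Wiman's inequality and the elementary optimisation) are routine.
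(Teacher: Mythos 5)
Your decomposition of the estimate is upside down, and the step you flag as ``soft'' is in fact false. You want $|c(\phi,\gamma)|=H(\CC_\gamma)/m_\gamma\ge 1/m_\gamma$ via the claim $H(\CC_\gamma)\ge1$, and then push all the hard (Wiman-type) work into an upper bound for $m_\gamma$. But by definition (see (\ref{eqH}), and Theorem~\ref{thmcorrespondence})
\[
H(\CC_\gamma)=\sum_{j=0}^{r}\frac{\gcd(a_j,a_{j+1})^2}{a_j a_{j+1}},
\]
and each summand lies in $(0,1]$; for a non-amphidrome $\gamma$ this equals the screw number $|s(\gamma)|=\sum_{i} 1/(b_i b_{i+1})$, which is typically a small fraction (e.g.\ a single node with $\gcd(a_0,a_1)=1$, $a_0a_1=n$, gives $H=1/n$). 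So $H(\CC_\gamma)$ is \emph{not} an integer and is in general much less than $1$; the entire difficulty of the theorem is to bound $H(\CC_\gamma)$ from \emph{below} in terms of $g$ and the type $i$, which is precisely where the Wiman/multiplicity estimates live. Conversely, $m_\gamma$ needs no such analysis: it is the length of the orbit of $\gamma$ inside the admissible system $\SC$, hence $m_\gamma\le \#\SC\le g$, a purely combinatorial count.

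The paper's proof therefore goes the other way around: it bounds $m_\gamma\le g$ directly from $\#\SC\le g$, and quotes the lower bounds $H(\CC_\gamma)\ge 1/(4g^2)$ (type $0$) and $H(\CC_\gamma)\ge 1/\big((4i+2)(4(g-i)+2)\big)$ (type $i\ge1$) from the proof of \cite[Theorem 1.4]{LT17} --- and it is \emph{there} that the Wiman-type bound $a_j\le 4h+2$ on multiplicities in a principal chain is used. Combining with Theorem~\ref{lemCoefScr} gives the type-wise bounds, and your final optimisation over $i$ is the same as the paper's. To repair your argument you must abandon $H(\CC_\gamma)\ge1$, replace it with the $H$-lower bound from \cite{LT17}, and replace your elaborate $m_\gamma$ estimate by the elementary count $m_\gamma\le\#\SC\le g$.
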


The modular invariants of families of curves can be generalized to holomorphic foliations and even to differential equations (see \cite{Ta}). It is natural to raise the following question:

\begin{question}
{\it For homeomorphisms which are not  pseudo-periodic map of negative twist,  is there a relation between fractional Dehn twist coefficients with generalized modular invariants?}
\end{question}

\vspace{2mm}
{\bf Notations} If $m,n$ are two integers, we set $\sigma(m,n)$ be the remainder of the division of $n$ by $m$, that is,
$\sigma(m,n)$ is  the integer satisfying $0\leq\sigma(m,n)<n$ and $\sigma(m,n)\equiv m~(\mathrm{mod}~n)$.

 Let ${\bf{a}}=(a_1,a_2,\ldots,a_{r})$ be an ordered sequence of integers, $d({\bf{a}})=\gcd(a_1,\ldots,a_r)$.

 $\lambda_{1,{\bf a}}=\frac{a_1}{d({\bf a})}$, $\lambda_{2,{\bf a}}=\frac{a_r}{d({\bf a})}$,
$\sigma_{1,\bf a}=\sigma(\frac{a_2}{d({\bf a})},\lambda_{1,{\bf a}}),\sigma_{2,\bf a}=\sigma(\frac{a_{r-1}}{d({\bf a})},\lambda_{2,{\bf a}})$.

% $K({\bf a})$: the number of $d({\bf a})$ in ${\bf a}$ minus 1.

%$\mu_{1,{\bf a}}$: the integer satisfying $0\leq\mu_{1,\bf a}<\frac{a_1}{d({\bf a})}$ and $\mu_{1,\bf a}\sigma_{1,\bf a}\equiv 1~(\mathrm{mod}~\lambda_{1,{\bf a}})$.

%$\mu_{2,{\bf a}}$: the integer satisfying $0\leq\mu_{2,\bf a}<\frac{a_r}{d({\bf a})}$ and $\mu_{2,\bf a}\sigma_{2,\bf a}\equiv 1~(\mathrm{mod}~\lambda_{2,{\bf a}})$.\\

\section{Modular invariants of families}\label{Section-Modular}

 By blowing up singularities of fibers of $f:X\to Y$, we can obtain a family $\bar f:\bar X\to Y$ satisfying that each singular fiber of $\bar f$ is minimal normal crossing. Let $\pi:\tilde Y\to Y$ be a base change of degree $d$. {\it The pullback fibration $\tilde f$}
of $f$ under $\pi$ is defined as the relative minimal model of the
desingularization of $\bar X\times_Y\tilde Y\to\tilde Y$.  See the following diagram for this construction.
\[\begin{CD}
\tilde{X} @<\tilde{\rho}<< X_2 @>\rho_2>> X_1 @>\rho_1>> X\times_Y \tilde{Y} @>\Pi'>>\bar X\\
@V\tilde{f}VV       @Vf_2VV        @Vf_1VV         @VVV                       @V\bar fVV\\
\tilde{Y} @=      \tilde{Y} @=   \tilde{Y} @=
\tilde{Y} @>\pi>> Y
\end{CD}
\]
Here $\rho_1$ is the normalization, $\rho_2$ is the minimal
desingularization of $X_1$, and $\tilde \rho:X_2\to\tilde X$ is the
contraction of the vertical $(-1)$-curves. We call $\pi$ a {\it
stabilizing base change} if $\tilde f$ is semistable.

Now we consider the above construction locally. Let $F$ be a fiber of $f$ over $p\in Y$. Assume that $\pi$ is totally ramified over $p$, i.e., $\pi^{-1}(p)$ contains only one point $\tilde p$. In this case, $\pi$ is defined  by $z=w^d$ locally, where $z$ (resp. $w$) is the local parameter of $Y$ (resp. $\tilde Y$). Denote by $\tilde F$ the fiber of $\tilde f$ over $\tilde p\in \tilde Y$. If $\tilde F$ is semistable, then $\tilde F$ is called the {\it $d$-th semistable model} of $F$.

 Set $\Pi_2=\Pi'\circ \rho_1\circ \rho_2:X_2\to \bar X$, and $\Pi=\Pi_2\circ\tilde \rho^{-1}:\tilde X\dashrightarrow \bar X$.
 Then $\Pi$ is a well defined rational morphism. For any irreducible smooth component $\tilde C$ of $\tilde F$, we can define the induced morphism $\Pi_{\tilde C}:\tilde C\to C$ by the unique extension, where $C$ is the image. On the other hand, if $C$ is an irreducible component of $\bar F$, we always use $\tilde C$ to denote an irreducible element of $\Pi^{-1}(C)$. Here the element is chosen arbitrarily.

Let $F$ be a singular fiber of $f$, and $\tilde F$ be its $d$-th semistable model.  Denote by $\delta_i(\tilde F)$ the number of nodes of type $i$ in $\tilde F$, then we define
\begin{equation}\label{edeltaF}
\delta_i(F):=\frac{\delta_i(\tilde F)}d,~~~(i=0,1,\ldots,[g/2]),
\end{equation}
 which is independent of the choice of the semistable model $\tilde F$ of $F$. Let $F_1,\ldots,F_s$ be all the singular fibers of $f$. If we choose a stabilizing base change totally ramified over $f(F_1),\ldots,f(F_s)$, then the modular invariant $\delta_i(\tilde f)$ of $\tilde f$ is $\delta_i(\tilde f)=\delta_i(\tilde F_1)+\cdots+\delta_i(\tilde F_s)$ for each $i$ by \cite[Proposition 3.92]{HM98}. So we have that
\begin{equation}\label{edeltaf}
\delta_i(f)=\delta_i(F_1)+\cdots+\delta_i(F_s),~~~i=0,1,\ldots,[g/2].
\end{equation}

%If $D$ is an irreducible singular component of $\tilde F$, then the singularities of $D$ are all nodes. After a base change of degree 2, then the strict transform of $D$ is smooth. Hence we may always assume that each irreducible component of $\tilde F$ is smooth, for $\delta_i(F)$ is independent of the choice of semistable model $\tilde F$.

%   A node $p$ of $\bar F$ is said to be {\it of type $i$}, if all the nodes of $\Pi^{-1}(p)$ are of type $i$.

 An irreducible component $C$ of $\bar F$ is said to be {\it principal} if either $C$ is not smooth rational, or $C$ meets other components of $\bar F_{\scriptstyle\mathrm{red}}$ at no less than 3 points.

\begin{definition}\label{pchain}
Let $\bar F$ be the minimal normal crossing model of a singular fiber $F$.

(1) An {\it H-J chain} $\CC$ of $\bar F$ is the following subgraph of the dual graph $G(\bar F)$ of $\bar F$,
\vspace{-1cm}
\begin{center}
\setlength{\unitlength}{1mm}
\begin{picture}(50,15)(0,0)
 \setlength{\unitlength}{1.6mm}
 \multiput(0,0)(7,0){3}{\circle{1}}
 \multiput(0.5,0)(7,0){2}{\line(1,0){6}}
 \multiput(15.5,0)(1,0){6}{\circle*{0.5}}
\multiput(22,0)(7,0){2}{\circle{1}}\put(22.5,0){\line(1,0){6}}
%\put(7,6){\circle{1}} \put(7,0.5){\line(0,1){5}}
\put(0,0){\circle{1}}
\put(29,0){\circle*{1}}

%\put(-1.5,-2){\makebox(0,0)[l]{$\scriptstyle{\bar{F}_z^c}$}}
\put(-1,2){\makebox(0,0)[l]{$\scriptstyle{a_r}$}}
\put(-1,-2){\makebox(0,0)[l]{$\scriptstyle{\Gamma_r}$}}
\put(6,2){\makebox(0,0)[l]{$\scriptstyle{a_{r-1}}$}}
\put(6,-2){\makebox(0,0)[l]{$\scriptstyle{\Gamma_{r-1}}$}}
\put(13,2){\makebox(0,0)[l]{$\scriptstyle{a_{r-2}}$}}
\put(13,-2){\makebox(0,0)[l]{$\scriptstyle{\Gamma_{r-2}}$}}
\put(21,2){\makebox(0,0)[l]{$\scriptstyle{a_1}$}}
\put(21,-2){\makebox(0,0)[l]{$\scriptstyle{\Gamma_1}$}}
\put(26,2){\makebox(0,0)[l]{$\scriptstyle{n(C)=a_{0}}$}}
\put(28,-2){\makebox(0,0)[l]{$\scriptstyle{C}$}}
\end{picture}
\end{center}
\vspace{3mm}
where $C$ is a principal component of $\bar F$, $n(C)=\mbox{mult}_{C}(\bar F)$, $a_j=\mbox{mult}_{\Gamma_j}(\bar F)$, $\Gamma_j\cong\mP^1 $ is not principal for each $1\leq j\leq r$, and $\Gamma_r$ meets no other component of $\bar F$.

(2)  If $C_1$ and $C_2$ are two principal components of $\bar F$ ($C_1,C_2$ may be the same). Let $\CC$ be the following subgraph of $G(\bar F)$,

\vspace{-1cm}
\begin{center}

\setlength{\unitlength}{1mm}
\begin{picture}(50,15)(0,0)
 \setlength{\unitlength}{1.6mm}
 \multiput(0,0)(7,0){3}{\circle{1}}\multiput(0.5,0)(7,0){2}{\line(1,0){6}}
 \multiput(15.5,0)(1,0){6}{\circle*{0.5}}
\multiput(22,0)(7,0){2}{\circle{1}}\put(22.5,0){\line(1,0){6}}
%\put(7,6){\circle{1}} \put(7,0.5){\line(0,1){5}}
\put(0,0){\circle*{1}}\put(29,0){\circle*{1}}

%\put(-1.5,-2){\makebox(0,0)[l]{$\scriptstyle{\bar{F}_z^c}$}}
\put(-4,2){\makebox(0,0)[l]{$\scriptstyle{n(C_1)=a_0}$}}
\put(-1,-2){\makebox(0,0)[l]{$\scriptstyle{C_1}$}}
\put(6,2){\makebox(0,0)[l]{$\scriptstyle{a_1}$}}
\put(6,-2){\makebox(0,0)[l]{$\scriptstyle{\Gamma_1}$}}
\put(13,2){\makebox(0,0)[l]{$\scriptstyle{a_2}$}}
\put(13,-2){\makebox(0,0)[l]{$\scriptstyle{\Gamma_2}$}}
\put(21,2){\makebox(0,0)[l]{$\scriptstyle{a_r}$}}
\put(21,-2){\makebox(0,0)[l]{$\scriptstyle{\Gamma_r}$}}
\put(26,2){\makebox(0,0)[l]{$\scriptstyle{n(C_2)=a_{r+1}}$}}
\put(28,-2){\makebox(0,0)[l]{$\scriptstyle{C_2}$}}
\end{picture}
\end{center}
\vspace{3mm}

\noindent where $n(C_i)=\mbox{mult}_{C_i}(\bar F)$,
$a_j=\mbox{mult}_{\Gamma_j}(\bar F)$, and $\Gamma_j\cong\mP^1$ is not
principal. Then we call $\CC$  a {\it principal chain} between $C_1$ and $C_2$ with {\it multiplicity sequence} ${\bf a}(\CC)=(a_0,a_1,\ldots,a_{r+1})$. If there is no confusion, we set $\CC=\langle C_1,C_2\rangle$.
\end{definition}

 We denote by $PC(\bar F)$ the set of all the principal chains of $\bar F$. If all the nodes of $\Pi^{-1}(p)$ are of type $i$ for any node $p$ of $\CC$, then we call $\CC$ a {\it principal chain of type $i$}. Denote by $PC_i(\bar F)$ the set of all the principal chains of $\bar F$ of type $i$.

 \begin{example}
 The following is a dual graph of a singular fiber of genus 2. There are exactly two principal components $C_1,C_2$. The subgraph between $C_1$ and $C_2$ is a principal chain, and chains between $C_i$ and $\Gamma_j$ ($(i,j)=(1,1),(1,2),(2,3),(2,4)$) are all H-J chains.
  \setlength{\unitlength}{1.5mm}
\begin{center}
\begin{picture}(60,15)(-20,-2)

\multiput(7,5)(7,0){3}{\circle{1}}
\put(7.5,5){\line(1,0){6}}\put(14.5,5){\line(1,0){6}}
\put(0,8.5){\circle{1}}\put(0.4,8.3){\line(2,-1){6.3}}
\put(0,1.5){\circle{1}}\put(0.4,1.7){\line(2,1){6.3}}
\put(-7,1.5){\circle{1}}\put(-6.5,1.5){\line(1,0){6}}
\put(28,8.5){\circle{1}}\put(27.6,8.3){\line(-2,-1){6.3}}
\put(28,1.5){\circle{1}}\put(27.6,1.7){\line(-2,1){6.3}}

\put(-1,10.5){\makebox(0,0)[l]{3}}
\put(-1,6.5){\makebox(0,0)[l]{$\Gamma_1$}}
\put(-1,3.5){\makebox(0,0)[l]{4}}
\put(-8,3.5){\makebox(0,0)[l]{2}}
\put(-8,-0.5){\makebox(0,0)[l]{$\Gamma_2$}}
\put(6,7){\makebox(0,0)[l]{6}}
\put(6,3){\makebox(0,0)[l]{$C_1$}}
\put(13,7){\makebox(0,0)[l]{5}}
\put(20,7){\makebox(0,0)[l]{4}}
\put(20,3){\makebox(0,0)[l]{$C_2$}}
\put(27,10.5){\makebox(0,0)[l]{2}}
\put(27,6.5){\makebox(0,0)[l]{$\Gamma_3$}}
\put(27,3.5){\makebox(0,0)[l]{1}}
\put(27,-0.5){\makebox(0,0)[l]{$\Gamma_4$}}
%\put(-7,12){\makebox(0,0)[l]{\mbox{$F_{\lambda}'$}}}
\put(12,0){\makebox(0,0)[c]}
\end{picture}
\end{center}
 \end{example}

\section{Fractional Dehn twists}\label{sectFDT}
   Let $\Sigma$ be a connected real 2-dimensional
manifold with or without boundary. When we emphasize its complex
structure, we call $\Sigma$ a Riemann surface.

Let $\phi:\Sigma\to \Sigma$ be a  pseudo-periodic map. For each cut curve $\gamma$ in the admissible system $\SC$, there exists a minimal integer
$\alpha$ such that $\phi^{\alpha}(\vec{\gamma})=\vec{\gamma}$, i.e., $\phi^{\alpha}(\gamma)=\gamma$ as a set and $\phi^\alpha$
preserving the orientation of $\gamma$.  The curve $\gamma$ is said to be {\it amphidrome} if $\alpha$ is even and
$\phi^{\alpha/2}(\vec{\gamma})=-\vec{\gamma}$, (where $\vec\gamma$ and $-\vec\gamma$ denote the same $\gamma$ with the opposite directions assigned) and {\it non-amphidrome} otherwise. There exists a minimal integer $L$ such that $\phi^{L}$ restricts to an annulus of ${\gamma}$ is isotopic to a Dehn
twist of $e$ times $(e\in \mZ)$.   The rational number $e\alpha/L$ is called the {\it screw number} of $\phi$ about $\gamma$, and is denoted by $s(\gamma)$.
We may always assume that $s(\gamma)\neq0$ for each $\gamma\in\SC$ (see \cite[P5]{MM11}).

Let $\phi:\Sigma\to \Sigma $ be a periodic  homeomorphism  of order $n\geq2$, and $p$ be a point on
$\Sigma$ . There is a positive integer $m_p$ such that the
points $p,\phi(p),\ldots,\phi^{m_p-1}(p)$ are mutually distinct and
$\phi^{m_p}(p)=p$. If $m_p=n$, we call the point $p$ a {\it
simple point} of $\phi$, while if $m_p<n$, we call $p$ a {\it
multiple point} of $\phi$.

 Let $\gamma$ be a cut curve in $\SC$, and $m=m_{\vec{\gamma}}$ be the smallest positive integer such that
$\phi^m(\vec{\gamma})=\vec \gamma$. The restriction of $\phi^m$ to $\vec
\gamma$ is a periodic map of order, say, $\lambda\geq1$.  Let $q$ be any point on  $\gamma$, and suppose that the
images of $q$ under the iteration of $\phi^m$ are ordered
$(q,\phi^{m\sigma}(q),\phi^{2m\sigma}(q),\ldots,
\phi^{(\lambda-1)m\sigma}(q))$ viewed in the direction of $\vec \gamma$,
where $\sigma$ is an integer with $0\leq \sigma \leq \lambda-1$,
$\mathrm{gcd}(\sigma,\lambda)=1$, and $\sigma=0$ iff $\lambda=1$.
%Let $\mu$ be the integer which
%satisfies
%\begin{equation}
%\sigma\mu\equiv1 \mod \lambda,~~ 0\leq \mu\leq \lambda-1.
%\end{equation}
%(Note that $\mu=0$ iff $\lambda=1$.)
Then the action of $\phi^m$ on $\vec \gamma$ is the rotation of angle
$2\pi\delta/\lambda$ with a suitable parametrization of $\vec \gamma$ as
an oriented circle. The triple $(m, \lambda,\sigma)$ is called
the {\it valency} of $\vec \gamma$ with respect to $\phi$.

We define the {\it valency of a boundary curve} (i.e., a
connected component of the boundary $\partial \Sigma$) as its
valency with respect to $\phi$, assuming it has the orientation induced
by the surface $\Sigma$. The {\it valency of a multiple point} $p$
is defined to be the valency of the boundary curve $\partial D_p$, oriented from the outside of a disk neighborhood $D_p$ of $p$.

Suppose $\pi:\Sigma\to \Sigma'$ is the $n$-fold
cyclic covering induced by $\phi$, where $\Sigma'$ is the quotient surface of $\Sigma$ with respect to $\phi$. Let $B_\phi=\{q_1,\ldots,q_s\}\subseteq \Sigma'$ be the
set of branch points. If $\tilde q_i$ is a point of the pre-image
of $q_i$, and let the valency of $\tilde q_i$ be $(m_i,\lambda_i,\sigma_i)$.
Then we know that $m_i$ is the number of points in the pre-image of
$q_i$ and $\lambda_i=n/m_i$. Since the valencies of points in the
pre-image of $q_i$ are the same,  we can define the valency of $q_i$ to be
the valency of $\tilde q_i$.

\section{Correspondence between chains and cut curves}\label{sect-corresp}

Let $f:S\to \Delta$ be a local family of Riemann surfaces of genus $g$, and $\tilde F$ be a semistable model the singular fiber $F$ of $f$. Let  $\phi_f:\Sigma_g\to \Sigma_g$ be the monodromy homeomorphism of $f$.
\begin{lemma}\label{lemmacorrCutPc}
Let $\SC$ be the admissible system of cut curves of $\phi_f$.
Then there is a 1-1 correspondence between $\SC$ and $PC(\tilde F)$ of $\tilde F$.

Moreover, there is a 1-1 correspondence between cut curves $\SC_i$ of type $i$ and principal chains $PC_i(\tilde F)$ of type $i$ of $\tilde F$, for each $i=0,1,\ldots,[g/2]$.
\end{lemma}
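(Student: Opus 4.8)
Let me describe the plan.

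\textbf{Step 1: a multitwist presentation of a power of $\phi_f$, and the shape of $\tilde F$.}
First I would normalise $\phi_f$. Cutting $\Sigma_g$ along the admissible system $\SC$ decomposes it into annular neighbourhoods $A_1,\dots,A_r$ of the $\gamma_i$ and into ``body'' pieces $B_1,\dots,B_\nu$ on which a suitable power of $\phi_f$ acts periodically. Hence for a sufficiently divisible integer $d$ (a common multiple of the orders of these periodic actions, of the lengths of the $\phi_f$-orbits of the $B_j$, of the denominators of the $c(\phi_f,\gamma_i)$, and of a further factor of $2$ when amphidrome curves are present) the map $\phi_f^{\,d}$ is isotopic to a multitwist supported on $\SC$ whose twisting number along $\gamma_i$ has absolute value $k_i=d\,|c(\phi_f,\gamma_i)|$. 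Since normalisation, minimal desingularisation and contraction of vertical $(-1)$-curves are isomorphisms away from the central fibre, the monodromy of the $d$-th semistable model $\tilde f$ is exactly $\phi_f^{\,d}$; therefore $\tilde F$ is, topologically, the semistable fibre attached to this multitwist, namely $\Sigma_g$ with a neighbourhood of each $\gamma_i$ replaced by a chain of $k_i-1$ rational $(-2)$-curves and then capped off, so that the remaining components are the closed-up connected components $\bar B_j$ of $\Sigma_g\setminus\bigcup_i\gamma_i$ (with $\operatorname{genus}\bar B_j=\operatorname{genus}B_j$ and with $\bar B_j$ meeting the rest of $\tilde F$ at one point per boundary circle of $B_j$).

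\textbf{Step 2: reading off the principal structure and the bijection.}
The chain curves inserted for a given $\gamma_i$ are smooth rational and meet the rest of $\tilde F$ in exactly two points, hence are not principal; so the principal components of $\tilde F$ are precisely the principal $\bar B_j$. Extending the chain of $(-2)$-curves over $\gamma_i$ maximally in both directions inside $G(\tilde F)$ then produces a principal chain $\CC_{\gamma_i}$ in the sense of Definition~\ref{pchain}, running between the (possibly coinciding) principal components adjacent to the two sides of $\gamma_i$. The assignment $\gamma_i\mapsto\CC_{\gamma_i}$ is surjective because every non-principal component of $\tilde F$ lies on such a chain, and it is injective — no two cut curves being merged into one principal chain — precisely because of the admissibility conditions on $\SC$: a $\bar B_j$ that were smooth rational and met the rest in at most two points would force $B_j$ to be a disc or an annulus, hence some $\gamma_i$ to bound a disc or two of the $\gamma_i$ to be parallel, contradicting conditions (i)--(ii). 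This gives the bijection $\SC\leftrightarrow PC(\tilde F)$, and (being described explicitly) it is independent of the chosen semistable model. The same reasoning, applied through the rational map $\Pi\colon\tilde X\dashrightarrow\bar X$ of Section~\ref{Section-Modular}, shows that principal components and principal chains of $\bar F$ and of $\tilde F$ correspond under $\Pi$ (only the lengths and multiplicities of chains changing), which recovers the correspondence with the chains $\CC_\gamma$ of $\bar F$ used in the Introduction.

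\textbf{Step 3: matching the type.}
For the refinement I would observe that partially normalising $\tilde F$ at a node lying on $\CC_{\gamma_i}$ is the same, up to filling in discs (which does not affect genera), as cutting $\Sigma_g$ open along $\gamma_i$: when $\gamma_i$ separates, the two sides have arithmetic genera equal to the genera of the two components of $\Sigma_g\setminus\gamma_i$, and the node is non-separating otherwise. Hence all nodes on $\CC_{\gamma_i}$ have the same type, equal to the topological type of $\gamma_i$, so $\CC_{\gamma_i}\in PC_i(\tilde F)$ if and only if $\gamma_i\in\SC_i$; together with Step~2 this yields the bijection $\SC_i\leftrightarrow PC_i(\tilde F)$ for each $i=0,1,\dots,[g/2]$.

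\textbf{Main obstacle.}
The delicate point is the amphidrome case: when $\gamma_i$ is amphidrome the two sides of $A_i$ are interchanged by a power of $\phi_f$, so the local model near $A_i$ is not the elementary $xy=t^{k_i}$ but a $\mathbb Z/2$-quotient of it, and one must check that after the extra ramification built into $d$ the resulting configuration is genuinely a single principal chain between two (coinciding) principal components — with the expected node count — rather than an H--J chain or a pair of chains. I would settle this by invoking the explicit amphidrome normal form of Matsumoto--Montesinos \cite{MM11} and verifying the genus computation of Step~3 in that model. A secondary, purely local point is to confirm that neither the Hirzebruch--Jung resolution in the base change nor the contraction $\tilde\rho$ ever turns a non-principal chain component into a principal one, which follows by inspecting self-intersections and incidences component by component.
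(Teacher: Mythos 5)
Your proof is correct in substance but takes a genuinely different route from the paper, and you flag a concern that is actually not an issue for this particular lemma.

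The paper's own proof is much shorter and proceeds via moduli. It cites \cite[Lemma 4.2]{AI02} for the fact that the stable Riemann surface obtained by shrinking each cut curve $\gamma\in\SC$ to a point $p_\gamma$ coincides topologically with the stable model $F^{\mathrm{st}}$ of the local family, and then observes that $\tilde F\to F^{\mathrm{st}}$ is the contraction of the $(-2)$-chains, so each principal chain of $\tilde F$ collapses to exactly one node $p_\gamma$; the bijection $\gamma\leftrightarrow\tilde\CC_\gamma$ and its compatibility with types are then immediate. Your proposal instead reconstructs $\tilde F$ directly from the multitwist $\phi_f^{\,d}$, reading off the $(-2)$-chains over the $\gamma_i$ and the closed-up bodies $\bar B_j$, then checks principality of the $\bar B_j$ using the admissibility conditions. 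This is more concrete and essentially re-derives the content of the \cite{AI02} lemma; it is fine as a strategy, but the central topological claim in your Step~1 --- that the semistable fibre with monodromy a multitwist along $\SC$ is $\Sigma_g$ with each $\gamma_i$ collapsed (and resolved to a chain of rational $(-2)$-curves) --- is itself a nontrivial result and should be attributed to \cite{MM11} or \cite{AI02} rather than asserted. Also, your injectivity argument in Step~2 conflates two things: showing that every $\bar B_j$ is principal (needed so that the chains terminate correctly, and correctly derived from admissibility) is not the same as showing $\gamma_i\mapsto\CC_{\gamma_i}$ is injective; the latter follows simply because disjoint cut curves produce distinct chains.

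On the ``main obstacle'': the worry you raise about the amphidrome case --- the local model being a $\mZ/2$-quotient of $xy=t^{k}$, possibly yielding an H--J chain rather than a principal chain --- is relevant only to the fibre $\bar F$ of the original family $f$, not to the semistable fibre $\tilde F$ that this lemma concerns. Since you take $d$ divisible by $2m_\gamma$ for every amphidrome $\gamma$, the monodromy $\phi_f^{\,d}$ no longer interchanges the two boundary circles of the annulus $A_\gamma$, so the local model for $\tilde f$ really is the elementary $xy=t^{k_i}$ and the configuration over $\gamma_i$ in $\tilde F$ is always a chain between two (possibly equal) principal $\bar B_j$'s, never an H--J chain. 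The amphidrome subtlety genuinely enters only in the next theorem, where one passes from $\tilde F$ down to $\bar F$ via $\Pi$; for Lemma~\ref{lemmacorrCutPc} it is a non-issue and could be dropped from the proof.
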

\begin{proof}
    By shrinking each curve $\gamma$ of $\SC$ to a point, say $p_\gamma$, we naturally associate a stable Riemann surface. The topological structure of the stable Riemann surface  coincides with the moduli point $F^{\mathrm{st}}$ of $f:S\to \Delta$
    ({\cite[Lemma 4.2]{AI02}}), where $F^{\mathrm{st}}$ is the stable model of $F$. Note that $\tilde F$ has no H-J chains. The stable curve $F^{\mathrm{st}}$ is obtained by contracting $(-2)$-curves in $\tilde F$, hence every  principal chain of $\tilde F$ is contracted to a point in $F^{\mathrm{st}}$.  Let $\tilde\CC_\gamma\in PC(\tilde F)$ be the principal chain contracted to the point $p_\gamma$ in $F^{\mathrm{st}}$. Then $\gamma\mapsto\tilde\CC_\gamma$ gives a 1-1 correspondence between cut curves $\SC$ and principal chains $PC(\tilde F)$ of $\tilde F$.

    Since the stable Riemann surface and $F^{st}$ are the same, the above correspondence preserves the type, by the definition of the type of principal chains of $\tilde F$ and that of the type of cut curves.
\end{proof}

In the following, for each $\gamma\in \SC$, we denote by $\tilde\CC_\gamma\in PC(\tilde F)$ the corresponding principal chain (Lemma \ref{lemmacorrCutPc}).

\begin{theorem}\label{thmcorrCutPrc}
There is a correspondence between $PC(\bar F)$ and the admissible system $\SC$ of cut curves of $\phi_f$, satisfying that:

  1) each $\CC\in PC(\bar F)$ corresponds to  a cyclic orbit $\gamma_1,\ldots, \gamma_{m}$ in $\SC$ under the permutation caused by $\phi_f$, where
  \begin{equation}\label{eqcorrCutPc}
  \Pi^{-1}(\CC):=\{\tilde\CC\in PC(\tilde F): \Pi(\tilde\CC)=\CC\}=\{\tilde\CC_{\gamma_1},\tilde\CC_{\gamma_2},\ldots,\tilde\CC_{\gamma_{m}}\}.
\end{equation}

  2) each $\gamma\in\SC$ corresponds to a unique principal chain $\CC_\gamma\in PC(\bar F)$, with $\CC_\gamma=\Pi(\tilde C_\gamma)$.

  Moreover, the correspondence preserves the type of cut curves and of principal chains. Precisely,

  1i) each $\CC\in PC_i(\bar F)$ corresponds to  a cyclic orbit $\gamma_1,\ldots, \gamma_{m}$ in $\SC_i$,

   2i) each $\gamma\in\SC_i$ corresponds to a unique principal chain  $\CC_\gamma\in PC_i(\bar F)$, where $\CC_\gamma=\Pi(\tilde C_\gamma)$.
\end{theorem}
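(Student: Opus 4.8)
The plan is to reduce Theorem \ref{thmcorrCutPrc} to Lemma \ref{lemmacorrCutPc} by tracking how the birational map $\Pi:\tilde X\dashrightarrow \bar X$ interacts with the monodromy action. First I would recall that $\tilde F$ is the $d$-th semistable model of $F$, so the base change $\pi:\tilde Y\to Y$ is a cyclic cover of degree $d$ totally ramified over $f(F)$, and the deck group $\mZ/d$ acts on $\tilde X$ (away from the exceptional locus) with quotient birational to $\bar X$. The monodromy of $\tilde f$ is $\phi_f^d$, which is a product of Dehn twists along the cut curves of $\SC$ and hence fixes each $\gamma$; correspondingly every principal chain of $\tilde F$ is $\mZ/d$-invariant as a \emph{set of chains}, but the deck transformation permutes the individual chains. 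The monodromy $\phi_f$ itself realizes, via the Stephan--Nielsen correspondence for semistable degenerations (the ``vanishing cycle'' picture), precisely this permutation: shrinking the curves of $\SC$ to points gives $F^{\mathrm{st}}$ by \cite[Lemma 4.2]{AI02}, and $\phi_f$ acts on $\SC$ the same way $\mZ/d$ acts on the set $\{p_\gamma\}$ of nodes of $F^{\mathrm{st}}$. So the cyclic orbits of $\phi_f$ on $\SC$ are exactly the fibers of the map ``node of $\tilde F^{\mathrm{st}} \to$ node of $\bar F^{\mathrm{st}}$'' induced by $\Pi$.

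Next I would make the chain-level statement precise. Given $\CC\in PC(\bar F)$, its nodes pull back under $\Pi$ to nodes of $\tilde F$, and each such node lies on a unique principal chain of $\tilde F$ (since $\tilde F$ is semistable and has no H-J chains, Lemma \ref{lemmacorrCutPc} applies). One checks that $\Pi$ maps each chain in $\Pi^{-1}(\CC)$ onto $\CC$: a principal component of $\bar F$ has principal preimages (being non-rational or having $\geq 3$ branches is preserved, possibly after the normalization $\rho_1$ and resolution $\rho_2$, because these only insert $(-2)$-curves), and a non-principal $\mP^1$ in a chain of $\bar F$ pulls back to non-principal $\mP^1$'s; contracting $(-2)$-curves (equivalently passing to the stable model) then identifies each $\tilde\CC\in\Pi^{-1}(\CC)$ with the chain collapsing to the same node that $\CC$ collapses to. Combining with the previous paragraph, $\Pi^{-1}(\CC)=\{\tilde\CC_{\gamma_1},\ldots,\tilde\CC_{\gamma_m}\}$ where $\gamma_1,\ldots,\gamma_m$ is a $\phi_f$-orbit; this is (\ref{eqcorrCutPc}) and statement 1). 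For 2), define $\CC_\gamma:=\Pi(\tilde\CC_\gamma)$; it is a principal chain of $\bar F$ by the same component-type bookkeeping, and it is the unique principal chain of $\bar F$ whose preimage contains $\tilde\CC_\gamma$, which gives the claimed bijection between orbits in $\SC$ and elements of $PC(\bar F)$.

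Finally, for the type-refined statements 1i) and 2i): the type of a node is read off from the arithmetic genera of the two components of the partial normalization at that node, equivalently from the topology of the cut curve after shrinking. Since the stable model $F^{\mathrm{st}}$ and the stable Riemann surface obtained by shrinking $\SC$ agree (\cite[Lemma 4.2]{AI02}, already used in Lemma \ref{lemmacorrCutPc}), and since $\Pi$ induces a map on stable models commuting with the deck action, the node $p_\gamma\in \tilde F^{\mathrm{st}}$ and its image node in $\bar F^{\mathrm{st}}$ have the same type; by the definition of the type of a principal chain (all nodes of $\Pi^{-1}(p)$ of type $i$) this matches the type of $\gamma$ given in the text. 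So the bijection of 1)--2) restricts to a bijection between $\SC_i$-orbits and $PC_i(\bar F)$, proving 1i)--2i).

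\textbf{Main obstacle.} The delicate point is verifying that $\Pi$ really sends principal chains to principal chains and that the fibers $\Pi^{-1}(\CC)$ are \emph{exactly} full $\phi_f$-orbits — i.e., controlling how the normalization $\rho_1$, the minimal desingularization $\rho_2$, and the contraction $\tilde\rho$ modify the dual graph. One must argue that these operations only insert or contract $(-2)$-curves (so principality of components and the H-J/principal-chain dichotomy are preserved), and that the deck group $\mZ/d$ acts transitively on $\Pi^{-1}(\CC)$ with the quotient being $\CC$; matching this action with the monodromy permutation via \cite[Lemma 4.2]{AI02} is what ties everything to $\phi_f$. The rest is bookkeeping with dual graphs and multiplicities.
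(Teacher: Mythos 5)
Your proposal follows essentially the same route as the paper's own proof: push principal chains of $\tilde F$ forward under $\Pi$, identify the fiber $\Pi^{-1}(\CC)$ with an orbit of the deck group $\mZ/d$ (which, by the construction of semistable reduction, is the $\phi_f$-orbit on $\SC$ under the identification of Lemma \ref{lemmacorrCutPc}), and check that types match by passing to nodes of the common stable model $F^{\mathrm{st}}$. The paper states these facts more tersely (``by the construction of semistable reduction'' and ``by the definitions of the type''), whereas you spell out the $(-2)$-curve bookkeeping and the deck-versus-monodromy matching more explicitly; the only cosmetic slip is the nonstandard label ``Stephan--Nielsen correspondence'', which should read as the usual vanishing-cycle/Matsumoto--Montesinos picture.
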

\begin{proof}
For every principal chain $\tilde \CC\in PC(\tilde F)$ of $\tilde F$,  $\Pi(\tilde\CC)\in PC(\bar F)$ is a principal chain of $\bar F$. For each $\CC\in PC(\bar F)$, its pre-image $\Pi^{-1}(\CC)$ consists of principal chains in $PC(\tilde F)$. The pre-image $\Pi^{-1}(\CC)=\{\tilde\CC\in PC(\tilde F): \Pi(\tilde\CC)=\CC\}$ is a cyclic orbit induced by the base change $\pi:\tilde Y\to Y$, by the construction of semistable reduction, see Section \ref{Section-Modular}. By the definitions of the type of principal chains of $\bar F$ and $\tilde F$, the correspondence preserves the type. Then the result is directly from Lemma \ref{lemmacorrCutPc}.
\end{proof}

Let $\gamma\in\SC$ be a cut curve, and $\gamma_1=\gamma,\gamma_2,\ldots,\gamma_{m_\gamma}$ be the cyclic orbit of $\gamma$ under the permutation caused by $\phi_f$. Then we call $m_\gamma$ the {\it multiplicity} of $\gamma$, which is the length of the orbit. So,  by Theorem \ref{thmcorrCutPrc},
\begin{equation}\label{eq-m-pre}
  m_\gamma=\# \Pi^{-1}(\CC_\gamma).
\end{equation}
 It is easy to see that
 $\CC_\gamma=\Pi(\tilde\CC_{\gamma_1})=\cdots=\Pi(\tilde\CC_{m_\gamma}).$
Let $\CC\in PC(\bar F)$ be a principal chain in Definition \ref{pchain}, we define

\begin{equation}\label{eqH}
H(\CC):=
\sum_{i=0}^r\frac{\gcd(\gamma_i,\gamma_{i+1})^2}{\gamma_i\gamma_{i+1}}.
\end{equation}

For the pseudo-periodic map $\phi_f$, we may assume that (i) there exists a system of disjoint annular neighborhoods $\{A_{\gamma_i}\}_{i=1}^r$ of the admissible system of cut curves subordinate to $\phi$, such that $\phi(\SA)=\SA$ where $\SA=\cup_{i=1}^rA_{\gamma_i}$, (ii) $\phi|_{\SB}:\SB\to \SB$ is a periodic map, where $\SB=\Sigma_g-\mathrm{Int}(\SA)$ (\cite[Theorem 2.1]{MM11}).
\begin{theorem}\label{thmcorrespondence}
Let $\gamma\in \SC$ be a cut curve of $\phi_f$, and the multiplicity sequence of  $\CC_\gamma=\langle C_{1},C_{2} \rangle \in PC(\bar F)$ be ${\bf a}_\gamma=(a_0,a_1,\ldots$, $a_{r+1})$.

  (1) If $\gamma$ is non-amphidrome,  then the valencies of the two boundaries of the annulus $A_{\gamma}$ are
  $$\Big(d({\bf a}_\gamma),\lambda_{1,{\bf a_\gamma}}, \sigma_{1,{\bf a_\gamma}}\Big),\Big(d({\bf a}_\gamma), \lambda_{2,{\bf a_\gamma}}, \sigma_{2,{\bf a_\gamma}}\Big).$$
  Moreover, $m_\gamma=d({\bf a}_\gamma)$, and $|s(\gamma)|=H(\CC_\gamma)$.

  (2) If $\gamma$ is amphidrome,  then we may assume $\mathrm{mult}_{C_2}(\bar F)=d({\bf a}_\gamma)$ and the valencies of the two boundaries of the annulus $A_{\gamma}$ are the same, which is
  $$\Big(d({\bf a}_\gamma), \lambda_{1,{\bf a_\gamma}},  \sigma_{1,{\bf a_\gamma}}\Big).$$  Moreover, $m_\gamma=\frac12 d({\bf a}_\gamma)$, and $|s(\gamma)|=2H(\CC_\gamma)$.

\end{theorem}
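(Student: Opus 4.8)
The plan is to localize everything to a neighbourhood of the single principal chain $\CC_\gamma=\langle C_1,C_2\rangle$ and to match its multiplicity sequence ${\bf a}_\gamma=(a_0,\dots,a_{r+1})$ against the valency and screw-number data of $\phi_f$ near $\gamma$ by a Hirzebruch--Jung computation. By Theorem \ref{thmcorrCutPrc} the principal chains of $\tilde F$ lying over $\CC_\gamma$ are cyclically permuted by the deck transformation of the semistable base change and correspond exactly to the $\phi_f$-orbit $\gamma_1,\dots,\gamma_{m_\gamma}$ of $\gamma$; and by \cite{MM11} the restriction of $\phi_f$ to the collar of $A_\gamma$ inside $\SB$, together with $A_\gamma$ itself, is up to isotopy a standard model determined by the ordered pair of boundary valencies and by the screw number. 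So it suffices to analyse a tubular neighbourhood $V$ of $\CC_\gamma$ in the minimal normal crossing model, the induced monodromy on the part of a general fibre lying in $V$, and the preimage $\Pi^{-1}(V)$.

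First I would record the constraints forced by $\bar F\cdot\Gamma_j=0$ at the interior vertices: $\Gamma_j^2=-(a_{j-1}+a_{j+1})/a_j\le-2$, since $\bar F$ has no redundant $(-1)$-curves, so ${\bf a}_\gamma$ is convex and $\gcd(a_j,a_{j+1})=\gcd(a_{j-1},a_j)$ for every $j$; hence $\gcd(a_i,a_{i+1})=d({\bf a}_\gamma)$ is constant along the chain and $H(\CC_\gamma)=\sum_{i=0}^{r}d({\bf a}_\gamma)^2/(a_ia_{i+1})$. Contracting the string $\Gamma_1\cup\cdots\cup\Gamma_r$ (a negative-definite chain) then presents $V$ as the Hirzebruch--Jung resolution of a surface in which the fibration is analytically $\{z=u^{a_0}v^{a_1}\}$ near where the chain meets $C_1$ and $\{z=u^{a_{r+1}}v^{a_r}\}$ near $C_2$; conversely ${\bf a}_\gamma$ is recovered from the continued fraction $[-\Gamma_1^2,\dots,-\Gamma_r^2]$ together with these two germs. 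A general fibre meets a punctured neighbourhood of the origin in $\{z=u^pv^q\}$ in $\gcd(p,q)$ disjoint annuli that are permuted cyclically by the monodromy; tracing this through the contraction shows the part of a general fibre in $V$ consists of $d({\bf a}_\gamma)$ annuli forming one $\phi_f$-cycle, each carrying on its two boundary circles the periodic rotations coming from $\phi_f|_{\SB}$. This gives $m_\gamma=d({\bf a}_\gamma)$ at once; the local monodromy at the node between $\Gamma_i$ and $\Gamma_{i+1}$ twists each annulus by $d({\bf a}_\gamma)^2/(a_ia_{i+1})$, and summing over $i=0,\dots,r$ yields $|s(\gamma)|=H(\CC_\gamma)$; finally a standard computation with the germ $\{z=u^{a_0}v^{a_1}\}$ (resp.\ $\{z=u^{a_{r+1}}v^{a_r}\}$) identifies the rotation of $\phi_f^{m_\gamma}$ on the corresponding boundary circle as the triple $(d({\bf a}_\gamma),\lambda_{1,{\bf a}_\gamma},\sigma_{1,{\bf a}_\gamma})$ (resp.\ $(d({\bf a}_\gamma),\lambda_{2,{\bf a}_\gamma},\sigma_{2,{\bf a}_\gamma})$), the residue $\sigma$ being read off from the Hirzebruch--Jung continued fraction of the germ. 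This proves (1).

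For the amphidrome case (2) I would pass to the degree-$2$ cover of $\Delta$ adapted to $\gamma$: an amphidrome $\gamma$ is exactly one for which $\phi_f^{\alpha/2}$ reverses $\vec\gamma$, and on the algebraic side this produces an involution of the corresponding chain whose quotient is $\CC_\gamma$ with $C_2$ the midpoint component, which is then forced to satisfy $\mathrm{mult}_{C_2}(\bar F)=d({\bf a}_\gamma)$. Applying case (1) upstairs, where $\gamma$ has become non-amphidrome, and descending: the $d({\bf a}_\gamma)$ annuli of the non-amphidrome picture are identified in pairs, so $m_\gamma=\tfrac12 d({\bf a}_\gamma)$; the involution swaps the two boundary circles of $A_\gamma$, so they acquire the common valency $(d({\bf a}_\gamma),\lambda_{1,{\bf a}_\gamma},\sigma_{1,{\bf a}_\gamma})$; and since a loop in the base is traversed twice before closing up in the cover, the screw number doubles to $|s(\gamma)|=2H(\CC_\gamma)$.

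The step I expect to be the main obstacle is the Hirzebruch--Jung bookkeeping that turns the germ $\{z=u^{a_0}v^{a_1}\}$ into the full valency triple — pinning down the residue $\sigma$, not merely the order $\lambda$, and doing it uniformly enough that the twisting contributions telescope to exactly $H(\CC_\gamma)$ across the whole chain. The amphidrome descent, though geometrically transparent, also needs care to confirm that the factor $2$ lands in both $m_\gamma$ and $|s(\gamma)|$ while remaining absent from the valency.
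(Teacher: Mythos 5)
Your proposal takes a genuinely different route from the paper. The paper's proof (its Step~1 and Step~2) simply reads off Matsumoto--Montesinos' structure theorem: Theorem~7.4 together with Lemmas~3.3 and~3.4 of \cite{MM11} express $\bar F$ as a union $Ch(\SB')\cup Ch(\SD)\cup Ch(\SA_{\mathrm{ln}})\cup Ch(\SA_{\mathrm{sp}})$ of numerical topological spaces whose attached multiplicities are given explicitly in the form $m_\gamma b_i$ (non-amphidrome) or $2m_\gamma b_i,\,m_\gamma,\,m_\gamma$ (amphidrome), with $\sum 1/(b_ib_{i+1})$ equal to $|s|$ or $|s|/2$; the theorem then follows by matching these multiplicities against $(a_0,\dots,a_{r+1})$ and noting $d({\bf a}_\gamma)=m_\gamma$ (resp.\ $2m_\gamma$). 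You instead try to \emph{re-derive} this dictionary by localizing to a tubular neighbourhood $V$ of $\CC_\gamma$, contracting the interior string, and computing the monodromy from the germs $\{z=u^{a_i}v^{a_{i+1}}\}$. That is a legitimate and more self-contained strategy, and your preliminary observations (constancy of $\gcd(a_i,a_{i+1})$ along a convex chain, hence $H(\CC_\gamma)=\sum d({\bf a}_\gamma)^2/(a_ia_{i+1})$, and that a general fibre meets $V$ in $d({\bf a}_\gamma)$ annuli) are correct and useful. But two steps, as written, are not proofs.

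First, the identity you flag yourself --- that the local monodromy at the node between $\Gamma_i$ and $\Gamma_{i+1}$ twists each of the $d({\bf a}_\gamma)$ annuli by exactly $d({\bf a}_\gamma)^2/(a_ia_{i+1})$ per period, and that these contributions add telescopically to $|s(\gamma)|=H(\CC_\gamma)$ --- is precisely the content of \cite[Lemma~3.3]{MM11} (and of the $|s|/2$ version in Lemma~3.4), so stating it without derivation leaves the heart of the theorem unproved. The same applies to reading off the residue $\sigma_{1,{\bf a}_\gamma},\sigma_{2,{\bf a}_\gamma}$ from the germ: one needs the explicit H--J recursion $b_{i-1}+b_{i+1}\equiv 0\ (\mathrm{mod}\ b_i)$ to extract $\sigma(b_1,b_0)$ and $\sigma(b_{l-1},b_l)$, which is exactly what the paper imports from \cite{MM11}.

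Second, the amphidrome reduction via a degree-$2$ base change of $\Delta$ is not correct. With monodromy $\phi^2$, the minimal power $\alpha'$ returning $\vec\gamma$ to itself is $m_\gamma$; when $m_\gamma$ is even one has $(\phi^2)^{m_\gamma/2}(\vec\gamma)=\phi^{m_\gamma}(\vec\gamma)=-\vec\gamma$, so $\gamma$ is \emph{still} amphidrome for $\phi^2$. So the intended reduction to case~(1) upstairs fails in general. Moreover the geometric picture you invoke --- a longer chain upstairs with $C_2$ as the midpoint, folded in two by an involution --- does not match what actually happens: in the amphidrome case $C_2=S_l$ is a degree-$3$ vertex carrying two extra tails $S_1',S_2'$ of multiplicity $m_\gamma$, so $\CC_\gamma$ is an endpoint-to-endpoint chain inside a star-shaped configuration, not the image of a longer chain under an involution with $C_2$ in the middle. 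Consequently the assertions $\mathrm{mult}_{C_2}(\bar F)=d({\bf a}_\gamma)$, $m_\gamma=\tfrac12 d({\bf a}_\gamma)$ and $|s(\gamma)|=2H(\CC_\gamma)$ are not established by this argument; in the paper they come directly from the explicit multiplicities $2m_\gamma b_0,\dots,2m_\gamma b_l$ with $b_l=1$ furnished by \cite[Lemma~3.4]{MM11}.
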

\begin{proof}
Step 1: We describe the construction of the numerical topological space $\bar F_{\mathrm{top}}$ from the pseudo-periodic map $\phi:\Sigma_g\to \Sigma_g$ according to \cite{MM11}. Here a numerical topological space is a topological space attached a positive integer to each irreducible component, and the attached positive integer is the multiplicity of the irreducible component. \\

 Now we take a disjoint union $\SD$ of invariant disk neighborhoods  of all the multiple points in $\SB$, and let $\SB'$ denote $\SB-\mathrm{Int}(\SD)$. We further classify the annuli in $\SA$ into $\SA_{\mathrm{sp}}$ and $\SA_{\mathrm{ln}}$ according to their character of being amphidrome or non-amphidrome.

It is proved that ({\cite[Theorem 7.4]{MM11}}): {\it There exist numerical topological spaces $Ch(\SB'),Ch(\SD)$, $Ch(\SA_{\mathrm{ln}}),Ch(\SA_{\mathrm{sp}})$ such that (\cite[P92]{MM11})}
\begin{equation}\label{FCh}
\bar F=Ch(\SB')\cup Ch(\SD)\cup Ch(\SA_{\mathrm{ln}})\cup Ch(\SA_{\mathrm{sp}})
\end{equation}
as numerical topological spaces, and the mentioned spaces are defined as follow:

$\underline{Ch(\SB')}$: The action of $\phi|_{\SB'}$ is free, and the well-defined quotient space $\SB'/(\phi|_{\SB'})$ is the topological space of the numerical surface $Ch(\SB')$. The attached multiplicity of each connected component $P$ of $\SB'/(\phi|_{\SB'})$ is the number of the sheets over $P$.

$\underline{Ch(\SD)}$: Let $p$ be a multiple point of $\phi$, and $p_1=p,p_2,\ldots,p_{m_p}$ be the cyclic orbit of $p$ under the permutation caused by $\phi$. Let $\SD_p$ consist of the components of $\SD$ containing $p_i$ for some $1\leq i\leq m_p$. Then there is a numerical space $Ch(\SD_p)$ shown in Figure \ref{SDp},  and attached the multiplicities $m_p b_0,m_p b_1,\ldots,m_p b_l$
to the components $D_0,S_1,\ldots,S_l$ where $D_0$ is a disk and $S_i$ are spheres.
\begin{figure}[h]
\vspace{5mm}
 \setlength{\unitlength}{1.5mm}
\begin{center}
\begin{picture}(60,15)(-5,-3)
\put(-0.1,5){\oval(10,10)[r]}
\put(9.5,5){\circle{10}}
\put(19,5){\circle{10}}
\multiput(28,5)(1.5,0){3}{\circle*{0.5}}
\put(40,5){\circle{10}}
\put(0,12){\makebox(0,0)[l]{\scriptsize$D_0$}}
\put(9,12){\makebox(0,0)[l]{\scriptsize$S_1$}}
\put(18.5,12){\makebox(0,0)[l]{\scriptsize$S_2$}}
\put(39.5,12){\makebox(0,0)[l]{\scriptsize$S_l$}}
\put(-2,5){\makebox(0,0)[l]{\scriptsize$m_pb_0$}}
\put(7.5,5){\makebox(0,0)[l]{\scriptsize$m_pb_1$}}
\put(17,5){\makebox(0,0)[l]{\scriptsize$m_pb_2$}}
\put(37.5,5){\makebox(0,0)[l]{\scriptsize$m_pb_l$}}

\end{picture}
\end{center}
\vspace{-5mm}
\caption{Numerical space $Ch(\SD_p)$\label{SDp}}
\end{figure}
 Here the sequence of integers $b_0>b_1>\cdots>b_l=1$ satisfies
$b_{j-1}+b_{j+1}\equiv 0~(\mathrm{mod}~ b_j)~~(j=0,1,\ldots,l-1),$
and $(m_p,b_0,b_1)$ is the valency of $p$ (Lemma 3.2 in \cite{MM11}). The space $Ch(\SD)$ is the union of $Ch(\SD_p)$, where $p$ runs over all the multiple points.

$\underline{Ch(\SA_{\mathrm{ln}})}$: Let $\gamma$ be a non-amphidrome cut curve, and $\gamma_1=\gamma,\gamma_2,\ldots,\gamma_{m_\gamma}$ be the cyclic orbit of $\gamma$ under the permutation caused by $\phi$. Let $\SA_\gamma=\cup_{i=1}^{m_\gamma} A_{\gamma_i}$ be the components of $\SA$ containing $\gamma_i$. The space $Ch(\SA_\gamma)$ is shown in Figure \ref{Aln}, and attached the multiplicities
$m_\gamma b_0,m_\gamma b_1,\ldots,m_\gamma b_l$
to the components $D_0,S_1,\ldots,S_{l-1},D_l$ where $D_i~(i=0,l)$ are disks and $S_i$ are spheres.
\begin{figure}[h]
\vspace{5mm}
 \setlength{\unitlength}{1.5mm}
\begin{center}
\begin{picture}(60,15)(-5,-3)
\put(-0.1,5){\oval(10,10)[r]}
\put(9.5,5){\circle{10}}
\put(19,5){\circle{10}}
\multiput(28,5)(1.5,0){3}{\circle*{0.5}}
\put(40,5){\circle{10}}
\put(49.6,5){\oval(10,10)[l]}
\put(0,12){\makebox(0,0)[l]{\scriptsize$D_0$}}
\put(9,12){\makebox(0,0)[l]{\scriptsize$S_1$}}
\put(18.5,12){\makebox(0,0)[l]{\scriptsize$S_2$}}
\put(38,12){\makebox(0,0)[l]{\scriptsize$S_{l-1}$}}
\put(48.7,12){\makebox(0,0)[l]{\scriptsize$D_l$}}
\put(-2,5){\makebox(0,0)[l]{\scriptsize$m_\gamma b_0$}}
\put(7.5,5){\makebox(0,0)[l]{\scriptsize$m_\gamma b_1$}}
\put(17,5){\makebox(0,0)[l]{\scriptsize$m_\gamma b_2$}}
\put(37,5){\makebox(0,0)[l]{\scriptsize$m_\gamma b_{l-1}$}}
\put(47.5,5){\makebox(0,0)[l]{\scriptsize$m_\gamma b_l$}}

\end{picture}
\end{center}
\vspace{-5mm}
\caption{Numerical space $Ch(\SA_{\gamma})$\label{Aln}~($\gamma$ non-amphidrome)}
\end{figure}
 Here the sequence of integers $b_0,b_1,\cdots,b_l$ satisfies
$b_{i-1}+b_{i+1}\equiv0~(\mathrm{mod} ~b_i)$, ${b_{i-1}+b_{i+1}}\geq2{b_i},~(i=1,2,\ldots,l-1),$
\begin{equation}
\sum_{i=0}^{l-1}\frac1{b_ib_{i+1}}=|s|,
\end{equation}
and  $(m_\gamma,b_0,\sigma(b_1,b_0)),(m_\gamma,b_l,\sigma(b_{l-1},b_l))$ are the valencies of the two boundaries of $A_\gamma$ (Lemma 3.3 in \cite{MM11}). The space $Ch(\SA_{\mathrm{ln}})$ is the union of $Ch(\SA_\gamma)$, where $\gamma$ runs over all the non-amphidrome cut curves.

 In this case, we also denote  $Ch(\SA_\gamma)$ by  $Ch(\SA_\gamma)'$ for simplicity.

$\underline{Ch(\SA_{\mathrm{sp}})}$: Let $\gamma$ be a amphidrome cut curve, and $\gamma_1=\gamma,\gamma_2,\ldots,\gamma_{m_\gamma}$ be the cyclic orbit of $\gamma$ under the permutation caused by $\phi$. Let $\SA_\gamma$ be the components of $\SA$ containing $\gamma_i$. The space $Ch(\SA_\gamma)$ is shown in Figure \ref{Asp}, and attached the multiplicities
$2m_\gamma b_0,2m_\gamma b_1,\ldots,2m_\gamma b_l,m_\gamma,m_\gamma$
to the components $D_0,S_1,\ldots,S_l,S_1',S_2'$. Here $D_0$ is a disk , $S_i$ and $S_i'$ are spheres.
\begin{figure}[h]
\vspace{5mm}
 \setlength{\unitlength}{1.5mm}
\begin{center}
\begin{picture}(60,15)(-5,-3)
\put(-0.1,5){\oval(10,10)[r]}
\put(9.5,5){\circle{10}}
\put(19,5){\circle{10}}
\multiput(28,5)(1.5,0){3}{\circle*{0.5}}
\put(40.5,5){\circle{10}}
\put(50,5){\circle{10}}
\put(55.3,10.3){\circle{6}}
\put(54.7,-0.7){\circle{6}}
\put(56,13.8){\makebox(0,0)[l]{\scriptsize$S_1'$}}
\put(55.5,3){\makebox(0,0)[l]{\scriptsize$S_2'$}}
\put(54.3,10.3){\makebox(0,0)[l]{\scriptsize$m_\gamma$}}
\put(53.7,-0.7){\makebox(0,0)[l]{\scriptsize$m_\gamma$}}
\put(0,12){\makebox(0,0)[l]{\scriptsize$D_0$}}
\put(9,12){\makebox(0,0)[l]{\scriptsize$S_1$}}
\put(18.5,12){\makebox(0,0)[l]{\scriptsize$S_2$}}
\put(38,12){\makebox(0,0)[l]{\scriptsize$S_{l-1}$}}
\put(48.7,12){\makebox(0,0)[l]{\scriptsize$S_l$}}
\put(-2,5){\makebox(0,0)[l]{\scriptsize$m_\gamma  b_0$}}
\put(7.5,5){\makebox(0,0)[l]{\scriptsize$m_\gamma b_1$}}
\put(17,5){\makebox(0,0)[l]{\scriptsize$m_\gamma b_2$}}
\put(37,5){\makebox(0,0)[l]{\scriptsize$m_\gamma b_{l-1}$}}
\put(47.5,5){\makebox(0,0)[l]{\scriptsize$m_\gamma b_l$}}

\end{picture}
\end{center}
\vspace{-5mm}
\caption{Numerical space $Ch(\SA_{\gamma})$\label{Asp}~($\gamma$ amphidrome)}
\end{figure}
Here the sequence of integers $b_0,b_1,\cdots,b_{l-1}$, $b_l=1$ satisfies
$b_{i-1}+b_{i+1}\equiv0~(\mathrm{mod} ~b_i)$, $(i=1,2,\ldots,l-1),$
\begin{equation}
  \sum_{i=0}^{l-1}\frac1{b_ib_{i+1}}=\frac12|s|,
\end{equation}
and  $(2m_\gamma,b_0,\sigma(b_1,b_0))$ is the valency of the boundary of $A_\gamma$ (Lemma 3.4 in \cite{MM11}). The space $Ch(\SA_{\mathrm{sp}})$ is the union of $Ch(\SA_\gamma)$, where $\gamma$ runs over all the amphidrome cut curves.

In this case, we denote by $Ch(\SA_\gamma)'$ the subspace of $Ch(\SA_\gamma)$ consisting of $D_0$,$S_1,\ldots$,$S_l$ with their multiplicities.\\

Step 2:  Since $\SC$ is the admissible system of cut curves, each connected component $P$ in $Ch(\SB')$ is a principal component of $\bar F$, and each  disk $D_i~(i=0,l)$ in $Ch(\SD)\cup Ch(\SA_{\mathrm{ln}})\cup Ch(\SA_{\mathrm{sp}})$ is part of some connected component $P$ in $Ch(\SB')$. Hence, by Definition \ref{pchain}, $Ch(\SA_\gamma)'$  is the same with the principal chain $\CC_\gamma\in PC(\bar F)$ as a numerical topological space, for each cut curve $\gamma$.

If $\gamma$ is amphidrome, then $\CC_\gamma=\langle C_{1},C_{2} \rangle \in PC(\bar F)$ is the dual graph of Figure \ref{Asp}, where we may assume $D_0$ is part of $C_1$ and $S_l=C_2$. Hence $d(\CC_\gamma)=\mathrm{mult}_{C_2}(\bar F)$.

Other results are directly from the construction of $Ch(\SA_{\mathrm{ln}})$ and $Ch(\SA_{\mathrm{sp}})$ above.

\end{proof}

\begin{remark}
 Let $\CC_{\mathrm{HJ}}$ be a H-J chain with multiplicity $(a_0,a_1,\ldots,a_r)$ where  $a_r|a_0$. From the proof above, either $\CC_{\mathrm{HJ}}$ is a part of $Ch(\SA_{\mathrm{sp}})$, or there exists a multiple point $p$ such that $\CC_{\mathrm{HJ}}$ is the same with $Ch(\SD_p)$ as a numerical topological space. In the latter case, the valency of the multiple point $p$ is
  $$\Big(a_r,\frac{a_0}{a_r},\sigma(\frac{a_1}{a_r},\frac{a_0}{a_r})\Big).$$
\end{remark}

\begin{theorem}\label{lemCoefScr}
Let $\phi$ be a pseudo-periodic map of negative twist, $\gamma\in \SC$, and $\CC_\gamma\in PC(\bar F)$ be the corresponding principal chain of the fiber $\bar F$ of $f_\phi$. Then

$$
|c(\phi,\gamma)|=\frac{H(\CC_\gamma)}{m_\gamma}=
\begin{cases}
\frac{|s_\gamma|}{m_\gamma}, & \mbox{if~} \gamma \mbox{~is~non-amphidrome,}\\
\frac{|s_\gamma|}{2m_\gamma}, & \mbox{if~} \gamma \mbox{~is~amphidrome.}
\end{cases}
$$
\end{theorem}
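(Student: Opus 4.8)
Our strategy is to split the asserted chain of equalities into two parts. The genuinely substantive part is the comparison of the fractional Dehn twist coefficient $c(\phi,\gamma)$ with the Matsumoto--Montesinos screw number $s_\gamma=s(\gamma)$; the rest is bookkeeping, obtained by feeding Theorem \ref{thmcorrespondence} into the first part.

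\textbf{Step 1.} I claim $c(\phi,\gamma)=s_\gamma/\alpha$, where $\alpha$ is the least positive integer with $\phi^\alpha(\vec\gamma)=\vec\gamma$; thus $\alpha=m_\gamma$ when $\gamma$ is non-amphidrome and $\alpha=2m_\gamma$ when $\gamma$ is amphidrome, by the definitions recalled in Section \ref{sectFDT}. Work inside the Matsumoto--Montesinos normal form of $\phi$ (assumptions (i) and (ii) stated just before Theorem \ref{thmcorrespondence}): $\phi(\SA)=\SA$ and $\phi|_{\SB}$ is periodic, where $\SA=\bigcup_i A_{\gamma_i}$ and $\SB=\Sigma_g-\mathrm{Int}(\SA)$. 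Let $L$ and $e$ be as in the definition of $s_\gamma$, so that $\phi^{L}$ carries the annulus $A_\gamma$ to itself, $\phi^{L}|_{A_\gamma}$ is isotopic rel boundary to $T_\gamma^{e}$, and $s_\gamma=e\alpha/L$. Choose $N$ to be a common multiple of $L$, of the analogous integers attached to all the other cut curves, and of the order of the periodic homeomorphism $\phi|_{\SB}$. Then $\phi^{N}|_{\SB}=\mathrm{id}$; in particular $\phi^{N}$ fixes every boundary circle $\partial A_{\gamma_j}\subset\SB$, maps each $A_{\gamma_j}$ to itself, and is supported on $\bigcup_j A_{\gamma_j}$. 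Hence $\phi^{N}|_{A_{\gamma_j}}$ is isotopic rel boundary to a Dehn-twist power $T_{\gamma_j}^{k_j}$, so $\phi^{N}=T_{\gamma_1}^{k_1}\cdots T_{\gamma_r}^{k_r}$ and, by the very definition of the fractional Dehn twist coefficient, $c(\phi,\gamma)=k_\gamma/N$. On the other hand, since $L\mid N$ and $\phi^{L}$ preserves $A_\gamma$,
\[
\phi^{N}|_{A_\gamma}=\big(\phi^{L}|_{A_\gamma}\big)^{N/L}\ \simeq\ T_\gamma^{\,eN/L},
\]
so $k_\gamma=eN/L$ and therefore $c(\phi,\gamma)=e/L=s_\gamma/\alpha$. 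Since $\phi$ has negative twist, $c(\phi,\gamma)<0$, hence $s_\gamma<0$, and taking absolute values gives
\[
|c(\phi,\gamma)|=\frac{|s_\gamma|}{\alpha}=
\begin{cases}\dfrac{|s_\gamma|}{m_\gamma},&\gamma\ \text{non-amphidrome},\\[2mm]\dfrac{|s_\gamma|}{2m_\gamma},&\gamma\ \text{amphidrome},\end{cases}
\]
which is the second equality of the statement.

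\textbf{Step 2.} Now insert Theorem \ref{thmcorrespondence}. If $\gamma$ is non-amphidrome, part (1) gives $|s_\gamma|=H(\CC_\gamma)$, so $|c(\phi,\gamma)|=|s_\gamma|/m_\gamma=H(\CC_\gamma)/m_\gamma$. If $\gamma$ is amphidrome, part (2) gives $|s_\gamma|=2H(\CC_\gamma)$, so $|c(\phi,\gamma)|=|s_\gamma|/(2m_\gamma)=2H(\CC_\gamma)/(2m_\gamma)=H(\CC_\gamma)/m_\gamma$. In both cases $|c(\phi,\gamma)|=H(\CC_\gamma)/m_\gamma$, completing the proof.

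The delicate point is the part of Step 1 where we identify the exponent $k_\gamma$ in the factorization $\phi^N=\prod_j T_{\gamma_j}^{k_j}$ with the localized screw contribution $eN/L$: one must know that none of the twisting hides in the periodic part on $\SB$, and that the comparison of the two definitions really takes place inside a single annulus, rel boundary. Both are exactly what the Matsumoto--Montesinos normal form provides, by making $\phi^N$ the identity off $\SA$ and on $\partial\SA$; everything else is formal.
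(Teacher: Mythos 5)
Your proof is correct and follows essentially the same route as the paper: localize the fractional Dehn twist coefficient to the annulus $A_\gamma$ using the Matsumoto--Montesinos normal form, compare with the screw number to get $|c(\phi,\gamma)|=|s_\gamma|/\alpha_\gamma$, and then invoke Theorem \ref{thmcorrespondence} to replace $|s_\gamma|$ by $H(\CC_\gamma)$ or $2H(\CC_\gamma)$. Your Step 1 is in fact slightly more explicit than the paper's, which introduces an integer $l_\gamma$ killing $\phi$ on $\partial\SA_\gamma$ and asserts $|c(\phi,\gamma)|=|e_\gamma|/l_\gamma$ directly, whereas you choose $N$ to also annihilate the periodic part $\phi|_{\SB}$, so that $\phi^{N}$ is visibly a commuting product of Dehn twists and the equality $c(\phi,\gamma)=k_\gamma/N$ comes straight from the definition.
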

\begin{proof}

 Using the above notations, $\gamma_1=\gamma,\gamma_2,\ldots,\gamma_{m_\gamma}$ is the cyclic orbit of $\gamma$,  $A_{\gamma_i}$ are disjoint annuli, and $\SA_\gamma=\cup_{i=1}^{m_\gamma} A_{\gamma_i}$. Hence  $\phi|_{\SA_\gamma}:\SA_\gamma\to\SA_\gamma$ is a homeomorphism, and the restriction of $\phi$ to the boundary $\partial \SA_\gamma=\cup_{i=1}^{m_\gamma} \partial A_{\gamma_i}$ is periodic. We know that $m_\gamma$ is the smallest positive integer such that (i) $\phi^{m_\gamma}(A_{\gamma_i})=A_{\gamma_i}$.

  If $\gamma$ is amphidrome, then $\alpha_\gamma=2m_\gamma$ is the smallest positive integer satisfying that (i) and (ii) $\phi^{\alpha_\gamma}$ does not interchange the boundary components. If $\gamma$ is nonamphidrome, then $\alpha_\gamma=m_\gamma$ is the smallest positive integer satisfying that (i) and (ii). See Theorem 2.3 and Theorem 2.4 in \cite{MM11}.

   Let $l_\gamma$ be a non-zero integer such that $\phi^{l_\gamma}|_{\partial \SA_\gamma}$ is the identity. Then $l_\gamma$ is a multiple of $\alpha_\gamma$, and $\phi^{l_\gamma}:A_i\to A_i$ is the result of $e_\gamma$ full Dehn twists, where $e_\gamma$ is an integer.  Then (\cite[P22]{MM11}) $$s(A_i):=e_\gamma \alpha_\gamma/l_\gamma=s(\gamma), ~~i=1,2,\ldots,m_\gamma.$$
Thus
  $$|c(\phi,\gamma)|=\frac{|e_\gamma|}{l_\gamma}=\frac{|s_\gamma|}{\alpha_\gamma}=
\begin{cases}
\frac{|s_\gamma|}{m_\gamma}, & \mbox{if~} \gamma \mbox{~is~non-amphidrome,}\\
\frac{|s_\gamma|}{2m_\gamma}, & \mbox{if~} \gamma \mbox{~is~amphidrome.}
\end{cases}$$
Hence by Theorem \ref{thmcorrespondence}, $|c(\phi,\gamma)|=H(\CC_\gamma)/m_\gamma$.
\end{proof}

\section{Proof of Theorems in Introduction}

\begin{proof}[Proof of Theorem \ref{thmmain}]
  % Set
%  \begin{align*}
%    PC_a(\bar F)&=\{\CC_\gamma\in PC(\bar F):\gamma \mbox{ is an amphidrome cut curve}\},\\
%    PC_n(\bar F)&=\{\CC_\gamma\in PC(\bar F):\gamma \mbox{ is a non-amphidrome cut curve}\}.
%  \end{align*}
%  According to the correspondences in Lemma \ref{lemmacorrCutPc} and Theorem \ref{thmcorrCutPrc}, we have that $PC(\bar F)=PC_a(\bar F)\cup PC_n(\bar F)$, and
%  $$\bigcup_{\CC\in PC(\bar F)}\Pi^{-1}(\CC)=\{\tilde \CC_\gamma:\gamma\in\SC\}.$$
  From \cite[Lemma 3.1]{LT17}, we have that
  $$\delta(F)=\sum_{i=0}^{[g/2]}\delta_i(F)=\sum_{\CC\in PC(\bar F)}H(\CC).$$
 By (\ref{eq-m-pre}),  Theorem \ref{lemCoefScr}, and correspondences in Section \ref{sect-corresp}, we have that
  \begin{align*}
    \delta(F)&=\sum_{\CC\in PC(\bar F)}H(\CC)=\sum_{\CC_\gamma\in PC(\bar F)}\sum_{\tilde\CC\in\Pi^{-1}(\CC_\gamma)}\frac{H(\CC_\gamma)}{\#\Pi^{-1}(\CC_\gamma)}\\
    &=\sum_{\CC_\gamma\in PC(\bar F)}\sum_{\tilde\CC\in\Pi^{-1}(\CC_\gamma)}\frac{H(\CC_\gamma)}{m_\gamma}=\sum_{\CC_\gamma\in PC(\bar F)}\sum_{\tilde\CC\in\Pi^{-1}(\CC_\gamma)}|c(\phi,\gamma)|\\
    %&=\sum_{\CC_\gamma\in PC(\bar F)}\sum_{\tilde\CC\in\Pi^{-1}(\CC_\gamma)}|c(\phi,\gamma)|\\
    &=\sum_{\gamma\in\SC}|c(\phi,\gamma)|.
  \end{align*}
\end{proof}

\begin{proof}[Proof of Theorem \ref{thmmaindeltai}]
  From \cite[Lemma 3.1]{LT17}, we have that
  $$\delta_i(F)=\sum_{\CC\in PC_i(\bar F)}H(\CC).$$
   Then the proof is similar to that of Theorem \ref{thmmain}, by Lemma \ref{lemmacorrCutPc}.
\end{proof}

\begin{proof}[Proof of Theorem \ref{thmlowerbounds}]
By the definition of admissible system $\SC=\{\gamma_i\}_{i=1}^r$ of cut curves, we know that $g\geq \#\SC=r$. Thus $g\geq m_\gamma$ for each $\gamma\in\SC$, for the cyclic orbit $\gamma_1,\ldots, \gamma_{m_\gamma}$ is a subset of $\SC$.

    If $\gamma$ is a cut curve of type 0, thus by  the proof of \cite[Theorem 1.4]{LT17} and Theorem \ref{lemCoefScr}, we have that
  $$|c(\phi,\gamma)|=\frac{H(\CC_\gamma)}{m_\gamma}\geq \frac1{4g^3}.$$
   Similarly, if $\gamma$ is a cut curve of type $i\geq1$, then
  $$|c(\phi,\gamma)|=\frac{H(\CC_\gamma)}{m_\gamma}\geq \frac1{g(4i+2)(4(g-i)+2)}.$$

  Hence for any $\gamma\in\SC$, we have that
  \begin{align*}
    |c(\phi,\gamma)|&\geq \mathrm{min}\big\{\frac1{4g^3}, \mathrm{min}\{\frac1{g(4i+2)(4(g-i)+2)}:i=1,2,\ldots, [g/2]\}\big\}\\
    &\geq\frac1{g(4[g/2]+2)(4(g-[g/2])+2)}\\
    &\geq
    \begin{cases}
\frac1{4g(g+1)^2}, & \mbox{if~} g \mbox{~is~even,}\\
\frac1{4g^2(g+2)}, & \mbox{if~} g \mbox{~is~odd.}
\end{cases}
  \end{align*}

\end{proof}

The above uniform lower bounds of fractional Dehn twist coefficients for each $g\geq2$  are not strict. It is interesting to obtain sharp lower bounds for these coefficients.

\section*{Acknowledgement}

 The author would like to thank Prof. Shengli Tan and Prof. Jun Lu for their interesting and fruitful discussions. Thanks to Prof. Youlin Li, Xiaoming Du and Yumin Zhong for their discussions on Dehn twist. The author is very grateful to Prof. Tadashi Ashikaga and Prof. Yukio Matsumoto for useful comments on pseudo-periodic maps. The author thanks the referee for pointing out mistakes. This work is supported by NSFC (No. 11601504) and Fundamental Research  Funds of the Central Universities (No. DUT18RC(4)065).

%\section*{References}

School of Mathematical Sciences

Dalian University of Technology

Dalian 116024, PR China

 E-mail: xlliu1124@dlut.edu.cn
\end{document}